\numberwithin{equation}{section}
\newcommand{\I}{\mathrm{I}}
\newcommand{\R}{\mathbb R}
\newcommand{\N}{\mathbb N}
\newtheoremstyle{plain}
  {10pt}
  {10pt}
  {\it}
  {0pt}
  {\bf}
  {}
  {\newline}
  {}
\newtheoremstyle{definition}
  {10pt}
  {10pt}
  {}
  {0pt}
  {\bf}
  {}
  {\newline}
  {}
\theoremstyle{plain}
\definecolor{MyDarkBlue}{rgb}{0,0.29,0.7}
\theoremstyle{plain}
\newtheorem{theorem}{Theorem}[section]
\newtheorem{coro}[theorem]{Corollary}
\newtheorem{lemma}[theorem]{Lemma}
\newtheorem{prop}[theorem]{Proposition}
\theoremstyle{definition}
\newtheorem{definition}[theorem]{Definition}
\newtheorem{remark}[theorem]{Remark}
\begin{document}

\title{Equidimensional Isometric Extensions}
\author{Micha Wasem}
\address{School of Engineering and Architecture of Fribourg, HES-SO University of Applied Sciences and Arts Western Switzerland, P\'erolles 80, 1700 Fribourg, Switzerland}
\email{micha.wasem@hefr.ch}
\date{\today}
\begin{abstract}
Let $\Sigma$ be a hypersurface in an $n$-dimensional Riemannian manifold $M$, $n\geqslant 2$. We study the isometric extension problem for isometric immersions $f:\Sigma\to\mathbb R^n$, where $\mathbb R^n$ is equipped with the Euclidean standard metric. We prove a general curvature obstruction to the existence of merely differentiable extensions and an obstruction to the existence of Lipschitz extensions of $f$ using a length comparison argument. Using a weak form of convex integration, we then construct one-sided isometric Lipschitz extensions of which we compute the Hausdorff dimension of the singular set and obtain an accompanying density result. As an application we obtain the existence of infinitely many Lipschitz isometries collapsing the standard two-sphere to the closed standard unit $2$-disk mapping a great-circle to the boundary of the disk.\end{abstract}
\maketitle
\section{Introduction}
In this article we analyze the problem of extending a given smooth isometric immersion $f:\Sigma\to \R^q$ of a hypersurface $\Sigma\subset (M^n,g)$ into $\R^q$ equipped with the standard Euclidean metric $g_0=\langle\cdot\,,\cdot\rangle$ in the equidimensional case $q=n$, i.e.\ we will look for a map $v:U\to\R^n$ satisfying
\begin{equation}\label{dissproblem}
\begin{aligned}v^*g_0 & = g\\
v|_\Sigma & = f,
\end{aligned}
\end{equation}
where $U\subset M$ is a neighborhood of a point in $\Sigma$. For high codimension, Jacobowitz established in \cite{jacobowitz} conditions on $\Sigma$ and $f$ such that problem \eqref{dissproblem} admits an analytic $(q\geqslant n(n+1)/2)$ respectively smooth $(q\geqslant n(n+3)/2)$ solution. He also provided a curvature obstruction to the existence of $C^2$-solutions to \eqref{dissproblem}. In \cite{wasem} it is proven that this obstruction is also an obstruction to $C^1$-solutions and in the present paper we will prove along the same lines that it is also an obstruction to Lipschitz solutions (Proposition \ref{lipschitzobstruction}). However, restricting the neighborhood $U$ to one side of $\Sigma$ only, one can hope to construct one-sided solutions of low regularity, where curvature does not exist. Using a Nash-Kuiper iteration, it is possible to construct \emph{one-sided isometric $C^1$-extensions} that satisfy a $C^0$-dense parametric $h$-principle in codimension greater than one i.e.\ if $q\geqslant n+1$ (see \cite{wasem}). Very recently, this result has been significantly improved by Cao and Sz\'ekelyhidi to extensions of class $C^{1,\alpha}$ for $\alpha < 1/(n(n+1)+1)$ in \cite{caolaszlo}. It is worth noticing that the improvement from $C^1$ to $C^{1,\alpha}$ needs a substantially more sophisticated iteration than the one used in \cite{wasem} and that the optimal regularity for this type of result is still open.
\\
\\
This type of aforementioned flexibility is not expected in the equidimensional case since the classical Liouville theorem (see for example \cite[pp.\ 30-31]{ciarlet}) implies that any two images of isometric $C^1$-immersions into Euclidean space are congruent, so even if $g$ is flat, there will not be $C^1$-solutions to \eqref{dissproblem} in general. In this case, it seems natural to relax the regularity and consider piecewise $C^1$-maps (think of folding a piece of paper). Indeed, Dacorogna, Marcellini and Paolini (\cite{dacorogna1}, \cite{dacorogna2}) constructed piecewise isometric $C^1$-immersions from a flat square into $\R^2$ mapping the boundary to a single point.\\
\\
It is remarkable that for general metrics, there is a curvature obstruction even at low regularity in the equidimensional case. We will show that there are no differentiable isometric extensions if $g$ is not flat (Proposition \ref{c1curvature}). Hence we will further relax the regularity and focus on Lipschitz maps instead. Problem \eqref{dissproblem} is replaced thus by the problem of finding a Lipschitz map $v:U\to\R^n$ that satisfies
\begin{equation}\label{dissproblemrelaxed}
\begin{aligned}v^*g_0 & = g\quad\text{a.e.}\\
v|_\Sigma & = f,
\end{aligned}
\end{equation}
where a.e.\ refers to the measure on $M$ induced by $g$. This setting provides enough flexibility to circumvent the curvature obstruction and the one given by Liouville's theorem. In \cite{sverak1} and \cite{sverak2}, M\"uller and \v{S}ver\'ak constructed solutions to the following related Dirichlet problem, where $\Omega$ is a bounded domain in $\R^n$:
$$
\begin{aligned}v^*g_0 & = \operatorname{id}\text{ $\mathcal L^n$-a.e.\ in }\Omega\\
v & = \varphi \text{ on }\partial\Omega,
\end{aligned}
$$
where $\mathcal L^n$ denotes the $n$-dimensional Lebesgue measure. They require the boundary datum $\varphi$ to be a \emph{short map} (see below for a definition), which is not the case for our boundary datum $f$. Maps satisfying \eqref{dissproblemrelaxed} may collapse entire submanifolds to single points, hence this definition does not reflect a truly geometric notion of isometry.
A more natural notion of isometry (which is equivalent if $v\in C^1$) arises if one requires an isometry to be a map that preserves the length of every rectifiable curve (see \cite{spadaro}) and an even stronger notion of isometry is discussed in \cite{petrunin2}. In \cite{spadaro}, a flexible construction of equidimensional isometric maps is given in the more natural setting of length-preserving isometries. The results obtained in the present work do not extend to this stronger setting and the techniques used in \cite{spadaro} lead to stronger results than the ones obtained here. In fact, our main result and our main application can be obtained as corollaries of the main result of \cite{spadaro}, although the boundary value problem is not explicitly mentioned in \cite{spadaro}.\\
\\
We believe it to be of independent interest to provide a detailed exposition of how a convex integration scheme \`a la Nash and Kuiper can be applied to a map that is not strictly short (this needs some care in coupling the iteration scheme with an appropriate sequence of cut-off functions) and more importantly how such an iteration scheme can be applied even if the relevant set (one has to usually choose a family of loops in a certain path-connected set) is disconnected.\\
\\
The Nash-Kuiper theorem (see \cite{nash2,kuiper}) uses an iteration scheme that starts with a \emph{short map} $u:(M^n,g)\to \R^q$. Recall that $u$ is called \emph{short}, provided $g-u^*g_0$ is a positive definite tensor field on $M$. The iteration scheme of Nash and Kuiper then adds \emph{corrugations} ($q=n+1$) or \emph{spirals} ($q\geqslant n+2$) at all scales to correct the metric error successively while controlling the $C^1$-norm of the perturbed maps during the process. This leads to the convergence in $C^1$. The basic building block is a family of loops with average zero in a suitable $(q-n)$-dimensional sphere. The method does not apply to the equidimensional case, since here, the sphere is degenerate and consists of two isolated points. However, the points define an interval $\I\subset\R$ and one may choose a loop in $\I$ that is concentrated on the two boundary points (see Lemmata \ref{regularcorrugation} and \ref{singularcorrugation}  for precise statements). Using this loop we obtain a corrugation function that leads to weaker estimates than the corresponding ones in codimension one, but still ensure the required Lipschitz regularity of a solution to \eqref{dissproblemrelaxed}.\\
\\
In local coordinates, the setting for problem \eqref{dissproblemrelaxed} can be reformulated as follows: Consider an $n$-polytope $(P,g)$ in $\R^n$ with an appropriate metric $g$ such that the origin is contained in $\mathring P$ and let the isometric immersion $f:B\to\R^{n}$ be prescribed on $B\coloneqq P\cap (\R^{n-1}\times \{0\})$.

\begin{definition}The sets $P\cap (\R^{n-1}\times\R_{\geqslant 0})$ and $P\cap (\R^{n-1}\times\R_{\leqslant 0})$ are called \emph{one-sided neighborhoods} of $B$.
\end{definition}

\begin{definition}\label{C_p-space}
Let $\bar\Omega$ be a one-sided neighborhood of $B$. A map $u$ belongs to $C_p^\infty(\bar\Omega,\R^n)$ if there is a finite simplicial decomposition of $\bar\Omega$ into non-degenerate $n$-simplices such that the restriction of $u$ onto each of the simplices is smooth. Similarly, a map $u$ belongs to the space $\mathrm{Aff}_p(\bar\Omega,\R^n)$ if there is a finite simplicial decomposition of $\bar\Omega$ into non-degenerate $n$-simplices such that the restriction of $u$ onto each of the simplices is affine.\end{definition}

\begin{definition}\label{subsolution}
Let $\bar\Omega$ be a one-sided neighborhood of $B$. A map $$u\in C_p^\infty(\bar\Omega,\R^n)\cap C^0(\bar\Omega,\R^n)$$ is called \emph{short map adapted to $(f,g)$}, if  $u|_B=f$ and $g-u^*g_0\geqslant 0$ in the sense of quadratic forms with equality on $B$ only (i.e.\ $g-u^*g_0$ is positive definite on $\bar\Omega\setminus B$ and zero on $B$).
\end{definition}

We are now ready to state our main result:

\begin{theorem}\label{maintheorem}
Let $u:\bar\Omega\to\R^n$ be a short map adapted to $(f,g)$. Then for every $\varepsilon>0$, there exists a Lipschitz map $v:\bar\Omega\to\R^n$ whose singular set has Hausdorff dimension $n-1$ satisfying $v|_B=f$, $v^*g_0=g$ $\mathcal L^n$-a.e.\ and $\|u-v\|_{C^0(\bar\Omega)}< \varepsilon$.
\end{theorem}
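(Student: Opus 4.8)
The plan is to run a Nash–Kuiper-type convex integration iteration in the weak form suggested by Székelyhidi, starting from the adapted subsolution $u$ and producing a sequence $u_k \in C^\infty_p(\bar\Omega,\R^n)\cap C^0(\bar\Omega,\R^n)$ with $u_0 = u$, such that the metric error $\rho_k \coloneqq g - u_k^*g_0$ shrinks to zero while $u_k$ converges uniformly to a Lipschitz limit $v$ that still restricts to $f$ on $B$. The key structural fact is that for any $k$, the error $\rho_k$ is a positive semidefinite quadratic form (zero exactly on $B$), so by a decomposition lemma we can write $\rho_k = \sum_{i} a_i^2\, (\nu_i\otimes\nu_i)$ as a finite sum of primitive rank-one forms with smooth coefficients $a_i$ supported away from $B$; each step of the iteration then removes one such rank-one term by adding a corrugation in the direction $\nu_i$. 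Because the codimension is zero, the relevant "loop" lives not in a sphere but in an interval $I \subset \R$; Lemmata \ref{regularcorrugation} and \ref{singularcorrugation} supply a one-dimensional corrugation profile concentrated on the two endpoints of $I$, and adding a high-frequency corrugation $u \mapsto u + \tfrac{1}{\lambda}\Gamma(\lambda\, x\cdot\nu)$-type term cancels the chosen rank-one part of the error up to an error of order $1/\lambda$ in $C^0$ and of controlled (not small, but bounded) size in the metric.

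\emph{Order of the steps.} First, I would fix the simplicial decomposition underlying $u \in C^\infty_p$ and, on each simplex, set up local coordinates in which $g$, $u^*g_0$ and hence $\rho_0$ are smooth; the positivity of $\rho_0$ on $\bar\Omega\setminus B$ (Definition \ref{subsolution}) guarantees a uniform lower bound on compact subsets away from $B$ and a controlled degeneration near $B$, which is what lets the construction reach all the way up to $B$ while keeping $v|_B = f$. Second, I would prove a \emph{one-step proposition}: given a piecewise-smooth map $w$ with error $\rho = g - w^*g_0 \geqslant 0$ and given $\delta > 0$, there is a piecewise-smooth $w'$ with $\|w - w'\|_{C^0} < \delta$, with $w'|_B = w|_B$, with $g - (w')^*g_0 \geqslant 0$, and with the $L^1$-norm (or a suitable weighted norm) of the new error smaller than a fixed fraction of that of $\rho$, while the Lipschitz constant of $w'$ grows in a summable-after-rescaling way. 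This is assembled by first splitting $\rho$ into finitely many rank-one pieces via the decomposition lemma (using that the set of positive semidefinite forms has the right extreme-ray structure), then applying the corrugation of Lemmata \ref{regularcorrugation}–\ref{singularcorrugation} once per piece, choosing the frequency $\lambda$ large enough at each sub-step to beat the accumulated $C^0$-error against $\delta$. Third, I would iterate the one-step proposition with $\delta_k$ chosen so that $\sum_k \delta_k < \varepsilon$ and with the error norms decaying geometrically, obtaining $u_k \to v$ in $C^0$ with $\|u - v\|_{C^0} < \varepsilon$. Fourth, I would check that the construction keeps a \emph{uniform} bound on $\|Du_k\|_{L^\infty}$ (this is the crucial point distinguishing the equidimensional case: the interval loop yields weaker estimates than a genuine sphere loop, but still a uniform gradient bound rather than a convergent one), so that $v$ is Lipschitz; and finally that the errors $\rho_k \to 0$ pointwise a.e.\ and $Du_k \to Dv$ a.e., whence $v^*g_0 = g$ a.e.

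\emph{Main obstacle.} The hard part is the interplay at the boundary $B$: the error $\rho$ vanishes on $B$ but must stay strictly positive (indeed we must have room to insert corrugations) on every simplex touching $B$, so the decomposition $\rho = \sum a_i^2 \nu_i\otimes\nu_i$ and the frequencies $\lambda$ must be chosen with $i$- and $k$-dependent weights that degenerate gracefully as one approaches $B$, without letting the Lipschitz constant blow up there or the corrections destroy the condition $w'|_B = w|_B$. Concretely, one wants the amplitude $a_i$ of each corrugation to be comparable to $\sqrt{\operatorname{dist}(\cdot,B)}$ near $B$ so that the added term is $C^0$-small and has bounded gradient uniformly up to $B$; controlling this while still reducing the error by a definite factor is the delicate estimate, and it is exactly where the weak (Székelyhidi-style) scheme — which asks only for a uniform, not decaying, derivative bound — makes the argument go through and yields Lipschitz rather than $C^1$ regularity. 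A secondary technical nuisance is matching corrugations across the faces of neighbouring simplices so that the global map stays continuous; this is handled by refining the triangulation and performing the corrections simplex-by-simplex with corrections that vanish to first order on the shared faces.
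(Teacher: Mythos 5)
Your overall plan---iterate Nash-type corrugations built from the interval loops of Lemmata \ref{regularcorrugation}--\ref{singularcorrugation}, keep the maps short, track the metric defect only in an integrated ($L^1$/trace) sense, and pass to a uniform limit---is indeed the paper's. (A small simplification: the uniform Lipschitz bound needs no separate ``check'' in the construction; shortness $g-\nabla u_k^T\nabla u_k\geqslant 0$ a.e.\ immediately gives $|\nabla u_k|^2\leqslant\operatorname{tr}g$.) But there is a genuine gap at the end of your argument. You assert ``$Du_k\to Dv$ a.e., whence $v^*g_0=g$ a.e.'' without any mechanism for the pointwise a.e.\ convergence of the gradients. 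A uniform $L^\infty$ bound together with $L^1$ decay of the trace defect gives only weak-$*$ compactness of $\nabla u_k$, and the quadratic map $\nabla u\mapsto\nabla u^T\nabla u$ does not pass through weak limits. The paper fills precisely this gap with Proposition~\ref{laszloestimate}: expanding $\operatorname{tr}(g-\nabla\widetilde u^T\nabla\widetilde u)$ via the Frobenius inner product, integrating the cross-term by parts, and using shortness of the new map $\widetilde u$, one obtains
\[
\|\nabla\widetilde u-\nabla u\|_{L^2(\bar\Omega)}^2\ \leqslant\ \int_{\bar\Omega}\operatorname{tr}\bigl(g-\nabla u^T\nabla u\bigr)\,\mathrm dx \ +\ C\bigl(\|\Delta u\|_{L^\infty}+\|\nabla u\|_{L^\infty}\bigr)\|\widetilde u-u\|_{C^0(\bar\Omega)}.
\]
Choosing the stage parameters $\varepsilon_k$ with $\sum\sqrt{\varepsilon_k}<\infty$ then makes $(\nabla u_k)$ Cauchy in $L^2$, hence convergent a.e.\ along a subsequence, and this a.e.\ limit is identified with the weak derivative of $v$. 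Without this estimate (or an equivalent strong-compactness argument) your scheme cannot conclude $\nabla v^T\nabla v=g$ a.e.

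A secondary but real difference concerns $B$. You propose to scale the corrugation amplitude like $\sqrt{\operatorname{dist}(\cdot,B)}$ and correct the metric defect all the way up to $B$. The paper does something simpler: at each stage it fixes a collar $\bar\Omega_\ell$ of $B$, cuts the amplitude to zero on $\bar\Omega_{\ell/2}$ (via $\eta_\ell$), leaves the map---and thus the defect---entirely untouched there, and observes that $\int_{\bar\Omega_\ell}\operatorname{tr}(g-\nabla u^T\nabla u)\,\mathrm dx\to 0$ as $\ell\to 0$ since the defect vanishes on $B$; so each stage drives the integrated defect below any target simply by shrinking $\ell$. This avoids your ``delicate estimate'' near $B$ and sidesteps a regularity issue: the coefficients $a_k$ in the primitive decomposition are only continuous at $B$, not smooth, so your $\sqrt{\operatorname{dist}}$ scheme would need control of quantities that the construction does not provide. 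The paper also approximates by piecewise \emph{affine} subsolutions after each step so that $\nabla u_{k-1}$ is constant on each simplex---a small but useful device you do not mention, and without which the step computations are substantially messier.
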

We obtain a Corollary regarding isometric extensions of the standard inclusion $$\iota: S^1\hookrightarrow \bar D^2\subset\R^2$$ to maps $S^2\to \bar D^2$, where $\bar D^2$ denotes the closed two-dimensional unit disk and $S^1$ is the equator of $S^2$. Here $\mu_{S^2}$ denotes the standard measure on $S^2$.
\begin{coro}[Isometric Collapse of $S^2$]\label{collapse}
There exist infinitely many Lipschitz maps $v:S^2\to\bar D^2$ satisfying $v|_{S^1}=\iota$ and $v^*g_0=g_{S^2}$ $\mu_{S^2}$-a.e.
\end{coro}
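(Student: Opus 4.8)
The plan is to deduce the corollary from Theorem \ref{maintheorem}, applied to each of the two closed hemispheres of $S^2$ and glued along the equator. Write $S^2=H_+\cup H_-$ with $H_+\cap H_-=S^1$ the equator, and regard each $H_\pm$ as a one-sided neighborhood of $B:=S^1$: cover $H_\pm$ by finitely many charts --- ordinary ones in its interior and, near $B$, charts identifying a collar of an arc of $B$ with a one-sided neighborhood --- and note that the convex integration underlying Theorem \ref{maintheorem} globalizes over these in the standard way, by localizing the metric error $g-u^*g_0$ (not the map) via a partition of unity. Since the equator of the unit $S^2$ and $\partial D^2$ are isometric circles, $\iota$ is an isometric immersion $S^1\to\bar D^2$, and $g_{S^2}$ restricts to each $H_\pm$.

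Next I would exhibit a subsolution adapted to $(\iota,g_{S^2})$ on $H_+$ (Definition \ref{subsolution}), and on $H_-$ by mirror symmetry. In geodesic polar coordinates $(\theta,\phi)$ about the north pole, where $B=\{\theta=\tfrac\pi2\}$ and $g_{S^2}=d\theta^2+\sin^2\theta\,d\phi^2$, set $u_+(\theta,\phi)=\rho(\theta)(\cos\phi,\sin\phi)$, so that
\begin{equation*}
g_{S^2}-u_+^*g_0=(1-\rho'^2)\,d\theta^2+(\sin^2\theta-\rho^2)\,d\phi^2 .
\end{equation*}
Choosing $\rho\colon[0,\tfrac\pi2]\to[0,1]$ smooth with $\rho(\tfrac\pi2)=1$, $\rho'(\tfrac\pi2)=1$, $\rho''(\tfrac\pi2)>0$, $\rho(0)=0$ (odd near $0$, so that $u_+\in C^\infty$), and $0<\rho<\sin\theta$, $|\rho'|<1$ on $(0,\tfrac\pi2)$ makes $g_{S^2}-u_+^*g_0$ positive definite off $B$ and zero on $B$ (the value $\rho'(\tfrac\pi2)=-1$ being excluded because $\rho\le 1=\rho(\tfrac\pi2)$ forces $\rho'(\tfrac\pi2)\ge0$); meanwhile $|u_+|=\rho\le 1$ gives $u_+(H_+)\subseteq\bar D^2$, with $|u_+|=1$ only on $B$, and $u_+|_B=\iota$. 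Such a $\rho$ is easy to write down near $B$ (e.g.\ $\rho(\theta)=\sin\theta-(\tfrac\pi2-\theta)+(\tfrac\pi2-\theta)^2$) and to continue smoothly down to the pole. Fixing $\varepsilon>0$ and applying Theorem \ref{maintheorem} on $H_+$ and $H_-$ yields Lipschitz $v_\pm\colon H_\pm\to\R^2$ with $v_\pm|_B=\iota$, $v_\pm^*g_0=g_{S^2}$ a.e., and $\|u_\pm-v_\pm\|_{C^0}<\varepsilon$; since $v_+=\iota=v_-$ on $B$ they glue to a map $v\colon S^2\to\R^2$ that is Lipschitz (being Lipschitz on each of the two closed pieces covering $S^2$) and satisfies $v|_{S^1}=\iota$ and $v^*g_0=g_{S^2}$ $\mu_{S^2}$-a.e., the equator being $\mu_{S^2}$-null. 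Letting $\varepsilon=1/k$ produces maps $v^{(k)}$; since they converge in $C^0$ to $u_+\cup u_-$, which is isometric nowhere off $B$, no single map occurs for infinitely many $k$, so infinitely many distinct maps $v$ arise.

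The one point that does not come for free, and which I expect to be the main obstacle, is that Theorem \ref{maintheorem} only guarantees $v(S^2)\subseteq\bar D^2_{\,1+\varepsilon}$, whereas the corollary asks for $v(S^2)\subseteq\bar D^2$. Away from a collar of $B$ the subsolution $u_\pm$ stays a uniform distance $\eta>0$ inside $\partial D^2$, so $\varepsilon<\eta$ settles that region. Inside a collar the difficulty is real: because $du_\pm$ is a linear isometry along $B$, the distance from $u_\pm$ to $\partial D^2$ decays only linearly in $\operatorname{dist}(\cdot,B)$, while the $C^0$-displacement introduced by the convex integration near $B$ is governed by the metric deficit $g_{S^2}-u_\pm^*g_0$, which also vanishes on $B$ to first order. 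Running the corrugation scheme of Theorem \ref{maintheorem} with the frequency blown up as one approaches $B$ makes this displacement $o(\operatorname{dist}(\cdot,B))$ there --- strictly below the recession of $u_\pm$ from $\partial D^2$ --- so the perturbed map stays in $\bar D^2$ (in the open disk off $B$, and equal to $\iota\subset\partial D^2$ on $B$). Verifying that the scheme can be run respecting this convex constraint, i.e.\ reconciling the two matching linear rates, is the crux; the reduction and the gluing above are routine.
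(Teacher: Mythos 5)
Your proposal follows essentially the same strategy as the paper's own proof: build an explicit radial subsolution on each closed hemisphere, run the iteration of Theorem \ref{maintheorem} on each, glue along the equator (which is fixed throughout the iteration), and separately argue that the image stays inside $\bar D^2$. The subsolutions are the same modulo reparameterization --- the paper writes a modified projection $\Psi\circ\Phi^{-1}$ in latitude coordinates with a scalar factor $f(\vartheta)$, you write $\rho(\theta)(\cos\phi,\sin\phi)$ in colatitude; both produce a radial map equal to $\iota$ on the equator, short off it, with linear recession from $\partial D^2$. So the reduction, the subsolution, and the gluing are all the paper's argument.

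The one place you and the paper phrase things differently is precisely the point you flag as ``the crux'': keeping $v(S^2)\subseteq\bar D^2$ in the collar of $B$ where both the recession of the subsolution from $\partial D^2$ and the metric defect vanish to first order. You propose to handle it by blowing up the corrugation frequency as one approaches $B$. That is not quite the mechanism in the paper's scheme: the $C^0$-displacement per stage is $O(1/\lambda)$ \emph{uniformly} and is not controlled by the metric defect, so increasing $\lambda$ does not by itself produce a displacement that is $o(\operatorname{dist}(\cdot,B))$. What the paper actually exploits is the cutoff $\eta_\ell$ already built into the stage: in stage $j$ the map is left \emph{unchanged} on the collar $\bar\Omega_{\ell_j/4}$, so a point at distance $d$ from $B$ is only ever perturbed in those stages with $\ell_j\lesssim d$, and one chooses $(\ell_j)$, $(\varepsilon_j)$ so that $\sum_{j:\,\ell_j\lesssim d}\varepsilon_j=o(d)$ (e.g.\ $\ell_j\sim 2^{-j}$, $\varepsilon_j\sim 4^{-j}$). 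Both mechanisms can be made to work, and both the paper and your write-up leave the quantitative verification to one sentence, so your proposal is at the same level of rigor; but if you want your description to match the actual scheme, replace ``frequency blown up near $B$'' by ``cutoff collars shrinking slowly relative to $\sum\varepsilon_j$.'' (A small bonus the paper does not mention: once $v(S^2)\subseteq\bar D^2$ with $v|_{S^1}=\iota$ is established, surjectivity onto $\bar D^2$ is automatic from a degree argument on each hemisphere, so ``collapse'' really means onto the disk.)
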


This Corollary shows the strong interaction between codimension and regularity: The standard inclusion $S^1\hookrightarrow\R^2\times\{0\}\hookrightarrow\R^3$ can be extended to an isometric immersion $v\in C^{1,\alpha}(S^2,\R^3)$ in a unique way (up to reflection across the $\R^2\times\{0\}$-plane) provided $\alpha>\frac{2}{3}$ (see \cite{borisov1,borisov2,borisov4,borisov3,borisov5,delellis}), but infinitely many isometric extensions $v\in C^1(S^2,\R^3)$ exist (see \cite{wasem}). The recent work of Cao and Sz\'ekelyhidi \cite{caolaszlo} indicates how this flexibility result can be improved to $v\in C^{1,\alpha}(S^2,\R^3)$, provided $\alpha<\frac17$. It is conjectured in \cite{caolaszlo} that for $n=2$, the techniques developed in \cite{inauen} can be adapted to improve the H\"older exponent further to $\alpha<\frac15$. In the case of codimension $0$, we will show that no isometric extension $v\in C^1(S^2,\R^2)$ exists. More precisely, we will show that the maps $v$ from Corollary \ref{collapse} cannot be locally $C^1$ nor locally injective, hence the restriction to Lipschitz maps is not excessive.

\subsection*{Acknowledgements}
I would like to sincerely thank Norbert Hungerb\"uhler for many helpful discussions about this work and Thomas Mettler for repeatedly taking me under his wings -- geile Schnitzer!

\section{Obstructions}
We will first show that curvature is an obstruction against merely differentiable (not necessarily $C^1$) isometric immersions in the equidimensional case which leads to the choice of the Lipschitz regularity in \eqref{dissproblemrelaxed}.
\begin{theorem}\label{c1curvature}
An $n$-dimensional Riemannian manifold $(M^n,g)$ can be locally isometrically embedded by a differentiable map into $(\R^n,g_0)$ if and only if $g$ is flat and in this case, the map is in fact of class $C^\infty$.\end{theorem}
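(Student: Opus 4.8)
The ``if'' direction is classical: if $g$ is flat then parallel transport is locally path-independent, so around any point there is a parallel orthonormal frame; being parallel and torsion-free it is a coordinate frame, hence there are local coordinates in which $g=g_0$, and the coordinate chart is a smooth isometric embedding. For the ``only if'' direction, together with the asserted $C^\infty$-regularity, the plan is to reduce everything to the Myers--Steenrod theorem --- a distance-preserving surjection between Riemannian manifolds is a smooth Riemannian isometry --- the point being that differentiability of the map is used \emph{only} to control lengths of curves, and the map is never differentiated twice.

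So let $v:U\to\R^n$ be a differentiable embedding of a neighbourhood $U$ of a point of $M$ with $v^*g_0=g$. First I would observe that $g=v^*g_0$ is positive definite, so $\D v_x$ is invertible for every $x\in U$; since $v$ is moreover a homeomorphism onto the open set $V:=v(U)$, the ``differentiable inverse function theorem'' --- which needs no $C^1$-hypothesis, one simply inverts the expansion $v(x)-v(x_0)=\D v_{x_0}(x-x_0)+o(|x-x_0|)$ --- shows that $w:=v^{-1}:V\to U$ is differentiable with $\D w=(\D v)^{-1}$, so that $w^*g=g_0$. The elementary fact I would then use is that $v$ and $w$ preserve the length of $C^1$-curves: if $\alpha$ is a $C^1$-curve in $U$, then $v\circ\alpha$ is differentiable with $|(v\circ\alpha)'(t)|=|\alpha'(t)|_g$ by the chain rule and $\D v^{\mathrm T}\D v=g$; since this speed is continuous it is bounded on a compact parameter interval, so $v\circ\alpha$ is Lipschitz, hence rectifiable, with $\mathrm{length}_{g_0}(v\circ\alpha)=\mathrm{length}_g(\alpha)$, and symmetrically for $w$.

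Next I would fix $p\in U$, choose $r>0$ with $\overline{B_r(v(p))}\subset V$, and set $W:=v^{-1}(B_r(v(p)))$, a connected open neighbourhood of $p$ with $v(W)=B_r(v(p))$. Pushing $C^1$-paths in $W$ forward by $v$, and straight segments in the ball $B_r(v(p))$ forward by $w$, the length identity above --- together with the convexity of $B_r(v(p))$, whose intrinsic metric is therefore the Euclidean one --- gives $d_g^W(x,y)=|v(x)-v(y)|$ for all $x,y\in W$, where $d_g^W$ denotes the Riemannian distance of $(W,g)$. Thus $v|_W$ is a distance-preserving bijection of the Riemannian manifold $(W,g)$ onto the Euclidean ball $(B_r(v(p)),g_0)$, so by Myers--Steenrod $v|_W$ is a $C^\infty$-diffeomorphism with $(v|_W)^*g_0=g|_W$. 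Hence $g$ has vanishing curvature on $W$ and $v\in C^\infty(W,\R^n)$; since $p$ was arbitrary, $g|_U$ is flat and $v\in C^\infty$.

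The real obstacle is the sub-$C^1$ regularity: a priori $\D v$ is only a pointwise-defined, possibly discontinuous matrix field, so one cannot differentiate $\D v^{\mathrm T}\D v=g$, and neither the Gauss equation nor the intertwining of Levi--Civita connections is available to read off flatness directly; the argument above is arranged so that all regularity is produced in one stroke, by Myers--Steenrod, after only first-order information has been used. A subsidiary point still requiring care is that a merely differentiable curve need not be rectifiable with length $\int|\cdot|$, so it matters that the curves $v\circ\alpha$ (and their $w$-counterparts) above have continuous, hence bounded, speed.
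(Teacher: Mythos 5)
Your proposal is correct and follows essentially the same route as the paper: both arguments invert $v$ via the inverse function theorem for merely differentiable maps (the paper cites Saint-Raymond for this), show that $v$ is a local distance-preserving homeomorphism, and then invoke a Myers--Steenrod-type regularity theorem (the paper uses Taylor's Theorem 2.1, a modern refinement of Myers--Steenrod) to upgrade to a $C^\infty$-isometry, from which flatness follows because curvature is preserved. The only substantive difference is one of exposition: the paper simply asserts that distance preservation follows from $f^*g_0=g$ and the local inversion theorem, whereas you fill in the needed detail that differentiable curves with continuous (hence bounded) pointwise speed are Lipschitz and therefore rectifiable with length $\int|\dot\gamma|$ --- a point genuinely worth spelling out at sub-$C^1$ regularity.
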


\begin{proof} The if part of the statement is a classical theorem in differential geometry (see for example \cite[Theorem 3.1]{taylor}). A local isometric immersion $f$ is locally a distance preserving homeomorphism by the differentiable local inversion theorem (see \cite{saintraymond}) and the condition $f^*g_0=g$. Such a map must be a local $C^\infty$-isometric diffeomorphism by \cite[Theorem 2.1]{taylor} since $g$ is smooth by assumption. Hence the statement is a simple consequence of the classical fact that the Riemannian curvature tensor is preserved under $C^\infty$-isometries.\end{proof}

Recall that $\Sigma$ is a hypersurface of an $n$-dimensional Riemannian manifold $(M,g)$ and $f:\Sigma\to\R^{n}$ is an isometric immersion we seek to extend to a neighborhood $U$ of a point in $\Sigma$. We denote by $A\in\Gamma(\mathrm S^2(T^*\Sigma)\otimes N\Sigma)$ the second fundamental form of $\Sigma$ in $M$, and let $h(X,Y)\coloneqq g(\nu,A(X,Y))$, where $\nu\in\Gamma(N\Sigma)$ is the unique (up to sign) unit normal vector field. Let further $\bar A\in\Gamma(\mathrm S^2(T^*\Sigma)\otimes f^*N\bar \Sigma)$ be the second fundamental form of $\bar \Sigma\coloneqq f(\Sigma)$ in $\R^n$ with associated scalar fundamental form $\bar h(\cdot\,,\cdot)\coloneqq\langle \bar A(\cdot\,,\cdot),\bar\nu\rangle$, where $\bar\nu\in\Gamma(f^*N\bar \Sigma)$ is again a unit normal vector that is unique up to sign. The following proposition is a variant of \cite[Proposition 1]{wasem}:
\begin{prop}[Lipschitz-Obstruction]\label{lipschitzobstruction}
If there exists a unit vector $v\in T_p\Sigma$ such that $|h(v,v)|_g>|\bar h(v,v)|$, no isometric Lipschitz extension $u:U\to\R^{n}$ can exist.
\end{prop}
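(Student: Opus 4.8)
The plan is to argue by contradiction. Suppose an isometric Lipschitz-extension $u:U\to\R^n$ of $f$ exists, and fix a unit $v\in T_p\Sigma$ with $|h(v,v)|_g>|\bar h(v,v)|$. I would evaluate $u$ along the $(\Sigma,g|_\Sigma)$-geodesic $\gamma$ through $p$ in direction $v$ — where $u=f$ is completely understood — and compare the Euclidean distance between two symmetric points of $f\circ\gamma$ with an upper bound for their $g$-distance that reflects only the second fundamental form of $\Sigma$; the hypothesis forces the former to be strictly larger, which contradicts the fact that $u$ cannot increase lengths.

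The two facts about $u$ I need despite its low regularity are: by Rademacher's theorem $u$ is differentiable $\mathcal L^n$-a.e.\ and, since $u^*g_0=g$ a.e., its differential $\mathrm{d}u_x$ is a linear isometry $(T_xM,g_x)\to(\R^n,g_0)$ for a.e.\ $x$; and, consequently, applying Fubini along a smooth $(n-1)$-parameter family of curves with fixed endpoints filling an open set, a generic member $\tilde c$ of the family meets the exceptional $\mathcal L^n$-null set in an $\mathcal L^1$-null subset of its parameter, so that $\operatorname{length}(u\circ\tilde c)=\int|\mathrm{d}u(\tilde c')|=\int|\tilde c'|_g=\operatorname{length}_g(\tilde c)$. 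In particular $u$ is $1$-Lipschitz from $(U,d_g)$ to $\R^n$ near $p$, and does not increase the length of generic curves.

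Now let $\gamma$ be the unit-speed $(\Sigma,g|_\Sigma)$-geodesic with $\gamma(0)=p$, $\dot\gamma(0)=v$, defined on a small interval $[-t,t]\subset\Sigma\cap U$, and set $\Gamma:=f\circ\gamma=u\circ\gamma$. On the image side, $\Gamma$ is a unit-speed geodesic of $\bar\Sigma=f(\Sigma)$, so $\Gamma''(0)=\bar A(v,v)$ with $|\bar A(v,v)|=|\bar h(v,v)|$, and the Taylor expansion of a unit-speed curve gives $|\Gamma(t)-\Gamma(-t)|=2t-\tfrac13|\bar h(v,v)|^2t^3+O(t^5)$. On the domain side I would use $g$-normal coordinates at $p$ with $\partial_1|_p=v$, $\partial_n|_p=\nu$. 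From $\nabla^M_{\dot\gamma}\dot\gamma=A(\dot\gamma,\dot\gamma)=h(\dot\gamma,\dot\gamma)\nu$ and the vanishing of the Christoffel symbols at $p$ one gets $\ddot\gamma^1(0)=0$, and differentiating once more the only curvature-dependent term, $\partial_1\Gamma^1_{11}(p)$, vanishes by the antisymmetries of the Riemann tensor, leaving $\dddot\gamma^1(0)=-h(v,v)^2$. Hence on $[-t,t]$ the curve $\gamma$ has only $O(t^2)$-sized coordinates transverse to $\partial_1$, and the Euclidean straight segment $\sigma_t$ from $\gamma(-t)$ to $\gamma(t)$ has Euclidean length $2t-\tfrac13h(v,v)^2t^3+O(t^5)$. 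Finally, since $g_{ij}=\delta_{ij}+O(|x|^2)$ in normal coordinates and the only metric correction that could enter $\operatorname{length}_g(\sigma_t)$ at order $t^3$ is proportional to $R_{1k1l}(p)\,\sigma_t^k\sigma_t^l$, which vanishes when $k=1$ or $l=1$ by antisymmetry and is $O(t^4)$ otherwise, one obtains $\operatorname{length}_g(\sigma_t)=2t-\tfrac13h(v,v)^2t^3+O(t^4)$.

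Combining the two sides: replacing $\sigma_t$ by a generic endpoint-preserving perturbation $\tilde\sigma_t$ along which $u$ preserves length (costing an extra $o(t^3)$),
\[
2t-\tfrac13|\bar h(v,v)|^2t^3+O(t^5)=|u(\gamma(t))-u(\gamma(-t))|\le\operatorname{length}(u\circ\tilde\sigma_t)=\operatorname{length}_g(\tilde\sigma_t)\le 2t-\tfrac13h(v,v)^2t^3+o(t^3),
\]
so $h(v,v)^2-|\bar h(v,v)|^2\le o(1)$ as $t\to0$, i.e.\ $|h(v,v)|_g\le|\bar h(v,v)|$, contradicting the choice of $v$. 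The step I expect to be the real obstacle is the last one on the domain side: a priori the curvature of $M$ contributes to $\operatorname{length}_g(\sigma_t)$ already at order $t^3$, on equal footing with the second fundamental form, so the argument hinges on the cancellation coming from the symmetries of the Riemann tensor — in effect, on the fact that the $\Sigma$-geodesic $\gamma$ departs from the $M$-geodesic tangent to it only to second order and only normally to $\Sigma$. The only other point requiring care is the low-regularity input, namely that $u$ cannot be evaluated on $\sigma_t$ itself and must be tested on the perturbed curve $\tilde\sigma_t$.
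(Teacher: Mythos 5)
Your proposal is correct and follows essentially the same strategy as the paper: argue by contradiction, use a Fubini-type argument to obtain an endpoint-preserving curve along which the Lipschitz map preserves $g$-length, and compare third-order Taylor expansions of the intrinsic domain distance and the Euclidean image distance to extract the second-fundamental-form inequality. The differences are cosmetic — you work on a symmetric interval $[-t,t]$ and carry out the normal-coordinate expansion explicitly (arriving at the $\tfrac{1}{3}$ coefficient), whereas the paper works on $[0,t]$, invokes Lemma 2.2 of \cite{wasem} for the expansion $d_M(p,\gamma(t)) = t - \tfrac{1}{24}|h(v,v)|_g^2 t^3 + O(t^4)$, and produces the generic curve through the explicit endpoint-fixing family $z(s,x)=\exp_{\sigma(s)}\bigl(s(s-1)x\bigr)$.
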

\begin{proof}
We argue by contradiction. Suppose $u$ exists and let (for $\varepsilon>0$ small enough) $\gamma:[0,\varepsilon)\to \Sigma\cap U$ be a geodesic with $\gamma(0)=p$ and $\dot\gamma(0)=v$ such that $d_M(p,\gamma(t))$ can be realized by a minimizing geodesic $\sigma:[0,1]\to U$ for all $t$.

\textit{Claim: }For every $\delta>0$ there exists a curve $c:[0,1]\to U$ joining $p$ and $\gamma(t)$ such that $u$ is differentiable in $c(t)$ $\mathcal L^1$-a.e. in $[0,1]$ and such that
$$
\int_0^1|\dot c(t)|_g\,\mathrm dt<d_M(p,\gamma(t))+\delta.
$$
\textit{Proof of the claim: } Let $D\subset U$ be the set where $u$ is differentiable,
$$Z\coloneqq [0,1]\times B_\rho^{n-1}(0)$$and consider the map
$$z:Z\to U, (s,x)\mapsto \exp_{\sigma(s)}(s(s-1)x),$$
where $x\in B_\rho^{n-1}(0)\subset \dot\sigma(s)^\perp\subset T_{\sigma(s)}U$ (here $B_\rho^{n-1}(0)$ is identified with the geodesic $\rho$-ball centered at the origin in the orthogonal complement of $\dot\sigma(s)\in T_{\sigma(s)}U$).
If $\rho>0$ is small enough, the restriction of $z$ onto $(0,1)\times B_\rho^{n-1}(0)$ is a diffeomorphism onto its image, hence $\widetilde D\coloneqq z^{-1}(D)$ has full $\mathcal L^n$-measure in $Z$. We obtain
$$
\int_Z |\partial_sz|_g\,\mathrm ds\,\mathrm dx = \int_{Z\cap\widetilde D}|\partial_s z|_g\,\mathrm ds\,\mathrm dx = \int_{B^{n-1}_\rho(0)}\int_{0}^{1}\chi_{\widetilde D}|\partial_s z|_g\,\mathrm ds\,\mathrm dx
$$
hence for $\mathcal L^{n-1}\text{-a.e.}$ $x$ it holds that
$$
\int_0^1\chi_{\widetilde D}(s,x)|\partial_s z(s,x)|_g\,\mathrm ds = \int_0^1|\partial_s z(s,x)|_g\,\mathrm ds.
$$
Since
$$
\lim_{x\to 0} \int_0^1|\partial_s z(s,x)|_g\,\mathrm ds =d_M(p,\gamma(t)),
$$
we can choose an appropriate $x$ such that $c(t)\coloneqq z(t,x)$ has the desired properties. This finishes the proof of the claim. Let $\bar p = f(p)$ and $\bar \gamma = f\circ \gamma$. Using the claim we obtain
$$\begin{aligned}
|u(c(1))-u(c(0))|\leqslant \int_0^1\left|\frac{\mathrm d}{\mathrm dt}(u\circ c)(t)\right|\,\mathrm dt = \int_0^1|\dot c(t)|_g\,\mathrm dt<d_M(p,\gamma(t))+\delta.
\end{aligned}$$
We conclude that $d_{\R^{n}}(\bar p,\bar \gamma(t))\leqslant d_M(p,\gamma(t))$, since $\delta$ was arbitrary. The rest of the proof is exactly the same as in \cite[Proposition 1, p.\ 754]{wasem}.
\end{proof}

\section{Adapted Short Maps}
For the construction of adapted short maps, we refer to a variant of  \cite[Proposition 2, p.\ 755]{wasem}:
\begin{prop}\label{constructionofsubsolutions}
Let $f:\Sigma\to\R^{n}$ be an isometric immersion. Suppose there exist unit normal fields $\nu\in\Gamma(N\Sigma)$ and $\bar\nu\in\Gamma(f^*N\bar \Sigma)$ such that $h(\cdot\,,\cdot)-\bar h(\cdot\,,\cdot)$ is positive definite, then around every $p\in \Sigma$, there exists a short map adapted to $(f,g)$.
\end{prop}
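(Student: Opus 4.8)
The plan is to construct the subsolution explicitly as a one-sided normal graph over $\bar\Sigma=f(\Sigma)$, following the scheme of Proposition~2.4 in \cite{wasem}; the only new feature of the equidimensional case is that $f^*N\bar\Sigma$ is a line bundle, so the graph is taken over the single unit normal $\bar\nu$. First I would fix unit normal fields $\nu\in\Gamma(N\Sigma)$ and $\bar\nu\in\Gamma(f^*N\bar\Sigma)$ with $h-\bar h$ positive definite, and localize: passing to coordinates around $p$ as explained in the introduction so that $\Sigma$ becomes a piece $B$ of $\R^{n-1}\times\{0\}$ and so that $-\nu$ points into $\{x^n<0\}$. On the corresponding side I would introduce Fermi coordinates $(x,s)$, $(x,s)\mapsto\exp^g_x(-s\nu(x))$, in which $g=\gamma_s+\mathrm ds^2$ with $\gamma_s$ a smooth path of metrics on $B$ satisfying $\gamma_0=f^*g_0$ and $\partial_s\gamma_s\big|_{s=0}=2h$ (the sign because we move in the $-\nu$-direction), hence $\gamma_s=\gamma_0+2sh+O(s^2)$ uniformly on a compact neighborhood of $p$.

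Next I would choose $\psi\in C^\infty([0,\varepsilon_0))$ with $\psi(0)=0$, $\psi'(0)=1$ and $\psi'(s)^2<1$ for $s\in(0,\varepsilon_0)$ --- say $\psi=\sin$ on $[0,\pi/2)$ --- and set $u(x,s)\coloneqq f(x)-\psi(s)\,\bar\nu(x)$, viewing $\bar\nu$ as an $\R^n$-valued map on $B$. Then $u$ is smooth, hence lies in $C_p^\infty(\bar\Omega,\R^n)\cap C^0(\bar\Omega,\R^n)$, and $u|_B=f$ since $\psi(0)=0$. A Gauss--Weingarten computation for $\bar\Sigma\subset\R^n$, using $\langle\mathrm df(X),\mathrm d\bar\nu(Y)\rangle=-\bar h(X,Y)$ together with $\langle\mathrm df(X),\bar\nu\rangle=0=\langle\mathrm d\bar\nu(X),\bar\nu\rangle$, shows that $u^*g_0$ is block-diagonal in the $(x,s)$-splitting: the cross terms vanish identically, the $(s,s)$-entry is $\psi'(s)^2$, and the $(x,x)$-block is $\gamma_0+2\psi\,\bar h+\psi^2\,Q$ with $Q(X,Y)\coloneqq\langle\mathrm d\bar\nu(X),\mathrm d\bar\nu(Y)\rangle$ bounded. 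Subtracting from $g$ and using $\psi(s)=s+O(s^2)$, the quadratic form $g-u^*g_0$ is again block-diagonal, with $(s,s)$-entry $1-\psi'(s)^2$ and $(x,x)$-block $2s(h-\bar h)+O(s^2)$. At $s=0$ both blocks vanish --- here $\psi'(0)^2=1$ is used --- so $g-u^*g_0$ vanishes on $B$; and for $0<s<\varepsilon$, with $\varepsilon$ small depending only on the uniform lower eigenvalue bound of $h-\bar h$ and on the size of the $O(s^2)$-remainder over the chosen neighborhood, both blocks are positive definite, whence $g-u^*g_0$ is positive definite. Restricting $u$ to $\bar\Omega\coloneqq P\cap(\R^{n-1}\times\R_{\leqslant 0})$ with $P$ a sufficiently small polytope about the origin then produces a subsolution adapted to $(f,g)$ around $p$.

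The one genuinely delicate point is the sign bookkeeping: one must extend into the $-\nu$-side of $\Sigma$ \emph{and} take the $-\bar\nu$-graph, i.e.\ the minus sign in $u=f-\psi\bar\nu$. The three other sign combinations produce an $(x,x)$-block of the form $-2s(h-\bar h)+O(s^2)$ or $2s(h+\bar h)+O(s^2)$, neither of whose sign is controlled by the hypothesis; the assumption $h-\bar h>0$ is precisely what makes the chosen combination work. Everything else --- the Fermi expansion of $\gamma_s$, the vanishing of the cross terms, and the smallness of $\varepsilon$ --- is routine, and the term $\psi^2Q$ is harmless near $B$ because it is of order $O(s^2)$.
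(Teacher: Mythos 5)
The paper gives no proof of this proposition — it simply refers to a variant of Proposition~2.4 of \cite{wasem} — and your one-sided normal-graph construction $u(x,s)=f(x)-\psi(s)\,\bar\nu(x)$ in Fermi coordinates is precisely that argument specialized to the equidimensional (line-bundle) case. The verification checks out: the cross terms of $u^*g_0$ vanish by the Weingarten relation, the $(s,s)$-entry of $g-u^*g_0$ is $1-\psi'(s)^2$, the $(x,x)$-block expands as $2s(h-\bar h)+O(s^2)$ (using $\partial_s\gamma_s|_{s=0}=2h$ for the $-\nu$ direction), and the sign bookkeeping correctly selects the $-\nu$ side together with the $-\bar\nu$ graph.
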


\subsection{Approximation of Short Maps by Piecewise Affine Maps}

We need to introduce the notion of an \emph{adapted piecewise affine short map} and therefore the following approximation result (see \cite{saigal} for a proof):
\begin{prop}\label{approximationprop}
Let $u\in C_p^\infty (\bar\Omega,\R^n)\cap C^0(\bar\Omega,\R^n)$. For every $\varepsilon>0$ there exists a map $v\in\mathrm{Aff}_p(\bar\Omega,\R^n)\cap C^0(\bar\Omega,\R^n)$ such that $\|u-v\|_{C^0(\bar\Omega)}+\|\nabla u-\nabla v\|_{L^\infty(\bar\Omega)}<\varepsilon.$
\end{prop}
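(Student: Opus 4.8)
The plan is to obtain $v$ as the continuous piecewise-affine Lagrange interpolant of $u$ on a sufficiently fine simplicial refinement of the decomposition adapted to $u$, and then to control the two error terms by the classical finite-element interpolation estimates.

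First I would fix a finite, conforming simplicial decomposition $\mathcal T_0$ of $\bar\Omega$ into non-degenerate $n$-simplices such that $u|_S$ is smooth up to the boundary for every $S\in\mathcal T_0$ (if the decomposition provided by $u\in C_p^\infty(\bar\Omega,\R^n)$ is not conforming, a preliminary refinement makes it so). Since $\mathcal T_0$ is finite, the quantities $C_0\coloneqq\max_{S\in\mathcal T_0}\|D^2(u|_S)\|_{L^\infty(S)}$ and $h_0\coloneqq\max_{S\in\mathcal T_0}\operatorname{diam}S$ are finite. Next, for each $m\in\N$ I would pass to a conforming refinement $\mathcal T_m$ of $\mathcal T_0$ obtained by a standard shape-regular simplicial subdivision of ratio $1/m$: every $T\in\mathcal T_m$ is then a non-degenerate $n$-simplex contained in a single simplex of $\mathcal T_0$, with $\operatorname{diam}T\leqslant h_0/m$, and with inradius-to-diameter ratio bounded below by a constant $\rho_0>0$ depending only on $n$ and $\mathcal T_0$, uniformly in $m$.

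Then I would let $v=v_m\colon\bar\Omega\to\R^n$ be the unique map that is affine on each $T\in\mathcal T_m$ and agrees with $u$ at every vertex of $\mathcal T_m$. Because $\mathcal T_m$ is conforming and $u\in C^0(\bar\Omega,\R^n)$, the interpolant $v_m$ is single-valued and continuous, hence $v_m\in\mathrm{Aff}_p(\bar\Omega,\R^n)\cap C^0(\bar\Omega,\R^n)$. On each $T\in\mathcal T_m$ the map $u$ is smooth, so the standard Lagrange interpolation bounds give, with $C_n$ depending only on $n$,
\[
\|u-v_m\|_{C^0(T)}\leqslant C_n\Big(\tfrac{h_0}{m}\Big)^2 C_0,\qquad \|\nabla u-\nabla v_m\|_{L^\infty(T)}\leqslant \frac{C_n}{\rho_0}\,\frac{h_0}{m}\,C_0,
\]
where the shape-regularity constant $\rho_0$ enters only in the gradient estimate (whose size a priori scales like the mesh aspect ratio times the mesh size). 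Taking the supremum over $T\in\mathcal T_m$, both right-hand sides tend to $0$ as $m\to\infty$, so it suffices to choose $m$ large enough that their sum is smaller than $\varepsilon$.

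The main obstacle is the construction of the refinements $\mathcal T_m$ enjoying all three required properties simultaneously: conforming (so that matching $u$ at the vertices yields a globally continuous map), subordinate to $\mathcal T_0$ (so that $u$ stays smooth on each piece), and uniformly shape-regular as $m\to\infty$ (so that the gradient error genuinely vanishes rather than merely staying bounded). This amounts to subdividing a reference simplex by a fixed combinatorial pattern and rescaling, which is exactly the content of \cite{saigal}; alternatively, for the present statement it already suffices to fix, once and for all, a single fine enough quasi-uniform triangulation of the polytope $\bar\Omega$ that is compatible with $\mathcal T_0$ along the shared faces.
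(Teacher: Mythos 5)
Your argument is correct and is the classical finite-element interpolation proof: the piecewise-linear Lagrange interpolant on conforming, shape-regular refinements of the decomposition adapted to $u$ has $C^0$-error of order $h^2$ and gradient error of order $h$, with constants controlled by $\|D^2(u|_S)\|_{L^\infty}$ and the shape-regularity ratio, so letting $h\to 0$ gives the claim. The paper itself supplies no proof of this proposition and simply defers to \cite{saigal}, whose method is precisely this, so you and the paper take the same route.

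Two small points worth tightening. First, invoking \cite{saigal} for the existence of conforming, uniformly shape-regular refinements of $\mathcal T_0$ is circular, since \cite{saigal} is the very source the proposition rests on; the refinement step is a separate classical fact (Kuhn--Freudenthal subdivision, or Bey's conforming refinement algorithm in $\R^n$) and should be referenced as such. Second, in your closing ``alternatively'' sentence the fine triangulation must genuinely be \emph{subordinate} to $\mathcal T_0$, i.e.\ every new simplex contained in a single simplex of $\mathcal T_0$, not merely agree with $\mathcal T_0$ along some faces: the pointwise bound for $\nabla u-\nabla v$ uses $\|D^2(u|_S)\|_{L^\infty}$, and on a simplex that straddles a face of $\mathcal T_0$ the map $u$ is in general only Lipschitz, so the estimate is simply unavailable there. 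Read with these two corrections, your proof is complete.
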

\begin{remark}\label{adaptedapprox}
Note that if an adapted short map $u$ is approximated by a piecewise affine map $v$, we cannot ensure that $u|_B=v|_B$ since this would require $u|_B$ to be piecewise affine already. To circumvent this problem, let $\bar\Omega_\ell\coloneqq\{x\in\bar\Omega, \operatorname{dist}(x,B)\leqslant\ell\}$ and define $\eta_\ell\in C^\infty(\bar\Omega,[0,1] )$ to be
$$
\eta_\ell(x)\coloneqq\begin{cases}\hfill 0,& \text{if }x\in \bar\Omega_{\ell/2}\\ \hfill 1, & \text{if }x\in \overline{\Omega_{\ell}^c}\coloneqq\left\{x\in\bar\Omega,\operatorname{dist}(x,B)\geqslant{\ell}\right\}\end{cases}
$$
and $0<\eta_\ell < 1$ elsewhere.
\begin{figure}[H]
\begin{center}
\psfragfig[scale=0.75]{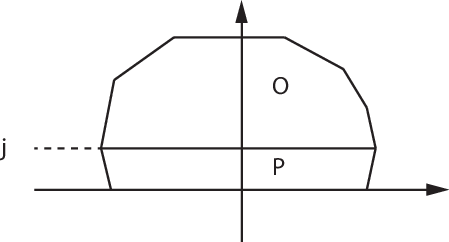}
\end{center}
\end{figure}
\begin{definition}\label{piecewiseadapted}
For fixed $\ell>0$, decompose $\bar\Omega_{\ell/2}$ and the closure of its complement in $\bar\Omega$ separately into non-degenerate simplices, approximate $u$ by $v$ in the sense of Proposition \ref{approximationprop} and finally replace $v$  by $w\coloneqq u+\eta_{\ell/2}(v-u)$. This map has the property that $w\equiv u$ on $\bar\Omega_{\ell/4}$ and the restriction of $w$ to $\overline{\Omega_{\ell/2}^c}$ is a piecewise affine map. If such a map has in addition the property that $g-\nabla w^T\nabla w> 0$ $\mathcal L^n\text{-a.e.}$, but $\nabla w^T\nabla w = g$ on $B$, we will call it a \emph{piecewise affine short map adapted to $(f,g)$}.
\end{definition}
The estimates
$$\begin{aligned}\|w-u\|_{C^0(\bar\Omega)}&\leqslant \|u-v\|_{C^0(\bar\Omega)}\\ \|\nabla w - \nabla u\|_{L^\infty(\bar\Omega)} & \leqslant \|\nabla \eta_{\ell/2}\|_{C^0(\bar\Omega)}\|u-v\|_{C^0(\bar\Omega)}+\|\nabla u-\nabla v\|_{L^\infty(\bar\Omega)}\end{aligned}$$
ensure that we can approximate an adapted short map and its first derivatives by an adapted piecewise affine short map.\end{remark}

\section{Convex Integration}
We start with a piecewise affine short map $u:\bar\Omega\to\R^{n}$ adapted to $(f,g)$, that is piecewise affine on $\overline{\Omega_{\ell/2}^c}$ and decompose the metric defect into a sum of primitive metrics (see \cite[p.\ 202, Lemma 1]{laszlo} for an explanation) as
$$
(g-u^*g_0)_x=\sum_{k=1}^ma_k^2(x)\nu_k\otimes\nu_k,
$$
where the $a_k^2$ are nonnegative on $\bar\Omega\setminus B$, zero on $B$, belong to $C^\infty_p(\bar\Omega,\R^n)$ and extend continuously to $B$. The sum is locally finite with at most $m_0\leqslant m$ terms being nonzero for a fixed point $x$. The $\nu_k\in S^{n-1}$ are fixed unit vectors. We intend to correct this metric defect by successively adding \emph{primitive metrics}, i.e.\ metric terms of the form $a^2\nu\otimes\nu$. Adding a primitive metric is done in a \emph{step}. A \emph{stage} consists then of $m$ steps, where the number $m\in\N$ is finite but may change from stage to stage. Fix orthonormal coordinates in the target so that the metric $u^*g_0$ can be written as $\nabla u^T\nabla u$, where $\nabla u = (\partial_ju^i)_{ij}$. For a specific unit vector $\nu\in S^{n-1}$ and a nonnegative function $a\in C^\infty(\bar\Omega)$ we aim at finding $v:\bar\Omega\to \R^{q}$ satisfying $\nabla v^T\nabla v \approx \nabla u^T\nabla u + a^2\nu\otimes \nu$. Nash solved this problem using the \emph{Nash twist}, i.e.\ an ansatz of the form 
\begin{equation}\label{nashtwist}
v(x) = u(x) +\frac{1}{\lambda}\bigg[\mathrm N_1(a(x),\lambda \langle x,\nu\rangle)\beta_1(x)+\mathrm N_2 (a(x),\lambda \langle x,\nu\rangle)\beta_2(x)\bigg],
\end{equation}
where $\mathrm N(s,t)\coloneqq s (-\sin t,\cos t)$ satisfies the circle equation $\partial_t\mathrm N_1^2+\partial_t\mathrm N_2^2=s^2$ and $\beta_i$ are mutually orthogonal unit normal fields, requiring thus codimension at least two ($q\geqslant n+2$). The improvement to codimension one was first achieved by Kuiper \cite{kuiper} with the use of a different ansatz (\emph{strain}). We will present the \emph{corrugation} introduced by Conti, de Lellis and Sz\'ekelyhidi \cite{delellis} since it will be illustrative for the codimension zero case. Define
$$\widetilde \xi\coloneqq\nabla u \cdot \left(\nabla u^T\nabla u\right)^{-1}\cdot \nu\text{ and }\widetilde \zeta\coloneqq\star\left(\partial_1u\wedge\partial_2u\wedge\ldots\wedge\partial_nu\right),$$
where $\star$ denotes the Hodge star with respect to the usual metric and orientation in $\R^{n
+1}$ and let
\begin{equation*}\xi\coloneqq\frac{\widetilde\xi}{|\widetilde\xi|^2},\qquad\zeta\coloneqq\frac{\widetilde\zeta}{|\widetilde\zeta||\widetilde \xi|}.\end{equation*}
The corrugation then has the form
\begin{equation}\label{ansatzdelellis}
v(x) = u(x)+\frac{1}{\lambda}\bigg[\Gamma_1(a(x)|\widetilde \xi(x)|,\lambda \langle x,\nu\rangle)\xi(x) + \Gamma_2(a(x)|\widetilde \xi(x)|,\lambda \langle x,\nu\rangle)\zeta(x)\bigg],
\end{equation}
where $\Gamma\in C^\infty(\R \times S^1,\R^2), (s,t)\mapsto \Gamma(s,t)$ is a family of loops satisfying the circle equation
$\left(\partial_t\Gamma_1+1\right)^2+\partial_t\Gamma_2^2 = 1+s^2$. Observe that both, $\mathrm N$ and $\Gamma$ satisfy the average condition
$$\oint_{S^1}\partial_t\mathrm N\,\mathrm dt =0\text{ and }\oint_{S^1}\partial_t\Gamma\,\mathrm dt =0$$
and -- as a crucial ingredient to control the $C^1$-norm during the iteration -- the following $C^1$-estimates (see \cite[Lemma 4, p.\ 762]{wasem} for the proof of the second estimate):
\begin{equation}\label{c1control}\begin{aligned}|\partial_t\mathrm N(s,t)|&=|s|\\
|\partial_t \Gamma(s,t)|& \leqslant\sqrt{2}|s|\end{aligned}\end{equation}
for all $(t,s)\in \R\times S^1$. In the equidimensional case, where there is no normal vector at all, we follow the strategy of Sz\'ekelyhidi (see \cite{laszlo}) and let $\widetilde \xi\coloneqq \nabla u^{-T}\cdot\nu$ and $\xi\coloneqq \widetilde \xi |\widetilde \xi|^{-2}$. A similar ansatz like \eqref{nashtwist} or \eqref{ansatzdelellis} is
\begin{equation}\label{lipschitzansatz}
v(x)=u(x)+\frac{1}{\lambda}\mathrm L(a(x)|\widetilde \xi(x)|,\lambda \langle x,\nu\rangle)\xi,
\end{equation}
where $\mathrm L:\R\times S^1\to\R^2, (s,t)\mapsto \mathrm L(s,t)$ is a smooth family of loops still to be constructed. Direct computations show that $\nabla v = \nabla u +\partial_t \mathrm L \xi\otimes \nu + O(\lambda^{-1})$ and therefore by the definition of $\xi$:
$$
\nabla v^T\nabla v = \nabla u^T\nabla u + \frac{1}{|\widetilde \xi|^2}(2\partial_t\mathrm L+\partial_t\mathrm L^2)\nu\otimes\nu + O(\lambda^{-1}).
$$
In order to obtain $\nabla v^T\nabla v = \nabla u^T\nabla u + a^2\nu\otimes\nu + O(\lambda^{-1})$, $\partial_t\mathrm L$ needs to satisfy the circle equation
$$(1+\partial_t\mathrm L)^2=1+|\widetilde \xi|^2a^2\eqqcolon1+s^2.$$
Since the circle here is zero-dimensional, it consists of two isolated points and there is no smooth map $\partial_t\mathrm L$ to that circle being zero in average. We will circumvent this problem by replacing the equality in the circle equation by a pointwise and an average inequality (see Lemma \ref{regularcorrugation} and Figure \ref{convexintegration}). Note that the definition of $\xi$ in \eqref{lipschitzansatz} requires $u$ to be immersive. If $\nabla u$ does not have full rank, we will choose $\xi$ to be a unit vector field in $\ker \left(\nabla u^T\right)$ which will lead to the circle equation $\partial_t\mathrm L^2=s^2$ (see Lemma \ref{singularcorrugation}).

\begin{lemma}[Regular Corrugation]\label{regularcorrugation}
For every $\varepsilon>0$ and $c>0$ there exists a map $$\mathrm L\in C^\infty([0,c]\times S^1),(s,t)\mapsto \mathrm L(s,t)$$ satisfying the following conditions:
\begin{align}
(1+\partial_t\mathrm L)^2 & \leqslant 1+s^2\label{pointwiseuplipschitz}\\
\frac{1}{2\pi}\oint_{S^1}\left(s^2-\partial_t\mathrm L^2\right)\,\mathrm dt & <\varepsilon\label{averagelowlipschitz}\\
\frac{1}{2\pi}\oint_{S^1}\partial_t\mathrm L\,\mathrm dt & = 0
\nonumber
\end{align}
\end{lemma}

\begin{proof}
Consider the $2\pi$-periodic extension of the function $\hat p_s:[0,2\pi]\to[-1,1]$, where $s\in[0,c]$:
$$\hat p_s(t)\coloneqq\begin{cases}\hfill-1, & x\in\left[\frac{\pi}{2}\left(1+\frac{1}{\sqrt{1+s^2}}\right), \frac{\pi}{2}\left(3-\frac{1}{\sqrt{1+s^2}}\right)\right]\\ \hfill 1 &\text{else.}\end{cases}$$
For $0<\varepsilon<\frac{1}{2}$ fixed, choose $0<\delta<\frac{\varepsilon\pi}{2(1+c^2)}$ and let $\varphi_\delta$ denote the usual symmetric standard mollifier. The function $$p:\R\times S^1\to\R, (s,t)\mapsto \varphi_{\delta} * \hat p_s(t)$$ is smooth and a direct computation using Fubini's theorem implies
$$
\frac{1}{2\pi}\oint_{S^1}p(s,t)\,\mathrm dt = \frac{1}{\sqrt{1+s^2}}.
$$
Since $p^2(s,\cdot)$ equals one on a domain of measure at least $2\pi-4\delta$ on each period we get
$$
\frac{1}{2\pi}\oint_{S^1}p^2(s,t)\,\mathrm dt> 1 - \frac{2\delta}{\pi} > 1-\frac{\varepsilon}{1+s^2}.
$$
The function $\mathrm L: \R\times S^1\to \R$
$$
\mathrm L(s,t)\coloneqq\int_0^t \left(\sqrt{1+s^2}\cdot p(s,u)-1\right)\,\mathrm du
$$
then has all the desired properties.
\end{proof}

\begin{lemma}[Singular Corrugation]\label{singularcorrugation}
For every $\varepsilon>0$ and $c>0$ there exists a map $$\widetilde {\mathrm L}\in C^\infty([0,c]\times S^1), (s,t)\mapsto \widetilde {\mathrm L}(s,t)$$ satisfying the following conditions:
\begin{align}
\partial_t \widetilde{\mathrm L}^2 & \leqslant s^2\label{pointwiseuplipschitzsing}\\
\frac{1}{2\pi}\oint_{S^1}\left(s^2-\partial_t \widetilde {\mathrm L}^2\right)\,\mathrm dt & <\varepsilon\label{averagelowlipschitzsing}\\
\frac{1}{2\pi}\oint_{S^1}\partial_t\widetilde {\mathrm L}\,\mathrm dt & = 0\nonumber
\end{align}
\end{lemma}

\begin{proof}
Consider the convolution of the $2\pi$-periodic extension of the map
$$[0,2\pi]\ni t\mapsto \begin{cases}\hfill-s, & t\in [\frac{\pi}{2},\frac{3\pi}{2}] \\ \hfill s&\text{else} \end{cases}$$
with $\varphi_\delta$, where $\delta<\frac{\varepsilon\pi}{2c^2}$. This convolution gives rise to a map $p(s,t)$. The map
$$\widetilde {\mathrm L}(s,t)\coloneqq\int_0^t p(s,u)\,\mathrm du$$
then has all the desired properties.
\end{proof}
\begin{center}
\begin{figure}[H]\label{convexintegration}
\psfragfig[scale=0.75]{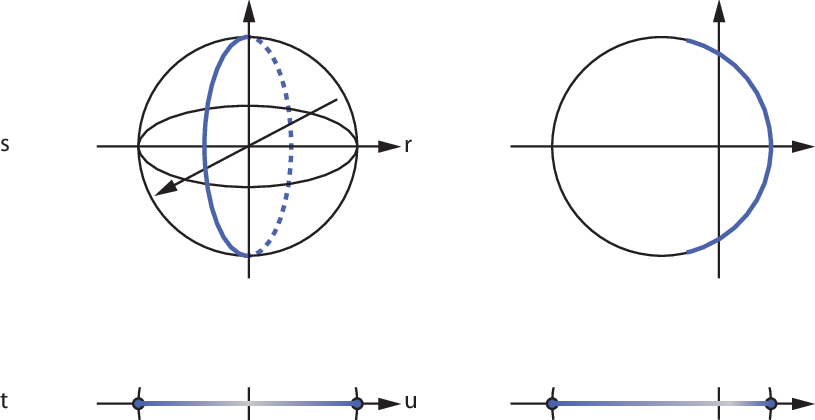}
\caption{From left to right and from top to bottom the picture illustrates the corrugation map used by Nash, the one used by Conti, de Lellis and Sz\'ekelyhidi, and the maps $\widetilde{\mathrm L}$ and $\mathrm L$ from the Lipschitz case. The radii of the spheres are given by $r_1=|s|$ and $r_2=\sqrt{1+s^2}$.}
\end{figure}
\end{center}

Conditions \eqref{averagelowlipschitz} and \eqref{averagelowlipschitzsing} oppose a $C^1$-estimate for $\mathrm L$ and $\widetilde{\mathrm L}$ analogous to \eqref{c1control}, hence the $C^1$-norms of the maps cannot be controlled during the iteration, but we will use a suitable $L^2$-estimate due to Sz\'ekelyhidi using an identity relating the first derivatives to the trace of the metric defect and integration by parts later on (see Proposition \ref{laszloestimate}).

\section{Iteration} Let $\ell>0$ and let $u$ be a piecewise affine short map adapted to $(f,g)$ in the sense of Definition \ref{piecewiseadapted} and decompose the metric defect as
$$
g-u^*g_0 = \sum_{k=1}^m a_k^2\nu_k\otimes\nu_k.
$$
Here the $a_k$ are nonnegative functions that are piecewise constant on $\overline{\Omega_{\ell/2}^c}$. Since $u$ is already isometric on $B$, we will add a ``cut-off'' error $\eta_{\ell}^2(g-u^*g_0)$ using the building blocks $\mathrm L$ and $\widetilde{\mathrm L}$.
\subsection{$k$-th Step} Let $u_{k-1}$ be a piecewise affine short map adapted to $(f,g)$ such that $u_{k-1}$ is piecewise affine on $\overline{\Omega_{\ell/2}^c}$. We introduce a map $\Theta_k\in C^\infty(\bar\Omega,[0,1])$ in the $k$-th step that is associated to $u_{k-1}$ as follows: Let $\{ S_i\}_i$ be the simplicial decomposition of $\bar\Omega$ according to $u_{k-1}$ and let $U_k$ be an open neighborhood of
$$
K_k\coloneqq\bar\Omega \setminus \bigcup_{i}\mathring S_i
$$
and set $\Theta_k = 1$ on $U_k^c$ and $\Theta_k=0$ on $K_k$. Observe that $U_k$ can be chosen to have arbitrary small Lebesgue measure. Let $0<\delta<1$ (the exact value will be determined later). We will now discuss the step for a simplex $S\subset\overline{\Omega_{\ell/2}^c}$. The restriction of $u_{k-1}$ to $S$ is an affine function. If $\nabla u_{k-1}$ is regular on $S$, let
$$\begin{aligned}\widetilde \xi_k&\coloneqq\nabla u_{k-1} (\nabla u_{k-1}^T\nabla u_{k-1})^{-1}\cdot\nu_k\\ \xi_k & \coloneqq\widetilde \xi_k|\widetilde \xi_k|^{-2}\\ s_k&\coloneqq(1-\delta)^{\sfrac{1}{2}}\Theta_k\eta_{\ell}a_k|\widetilde\xi_k|\\ \mathrm L_k(x)&\coloneqq\mathrm L\left(s_k,\lambda_k\langle x,\nu_k\rangle\right).\end{aligned}$$
and define $$u_k\coloneqq u_{k-1}+\frac{1}{\lambda_k}\mathrm L_k\xi_k.$$We have $u_k\in C^\infty(S,\R^{n})$ and $\|u_{k-1}-u_k\|_{C^0(S)}$ can be made arbitrarily small provided the free parameter $\lambda_k$ is large enough. For the Euclidean metric pulled back by $u_k$, we find
$$
\nabla u_{k}^T\nabla u_k = \nabla u_{k-1}^T\nabla u_{k-1} + \frac{1}{|\widetilde\xi_k|^2}\left(2\partial_t\mathrm L_k+\partial_t \mathrm L_k^2\right)\nu_k\otimes\nu_k+O\left(\lambda_k^{-1}\right).
$$
If $\nabla u_{k-1}$ is singular, choose $\xi_k\in \ker(\nabla u_{k-1}^T)$ to be a unit vector, let
$$\begin{aligned}s_k&\coloneqq(1-\delta)^{\sfrac{1}{2}}\Theta_k\eta_{\ell}a_k\\ \widetilde{\mathrm L}_k(x)&\coloneqq\widetilde{\mathrm L}\left(s_k,\lambda_k\langle x,\nu_k\rangle\right)\end{aligned}$$ and define $$u_k\coloneqq u_{k-1}+\frac{1}{\lambda_k}\widetilde{\mathrm L}_k\xi_k.$$As in the regular case, $u_k\in C^\infty(S,\R^{n})$ and $\|u_{k-1}-u_k\|_{C^0(S)}$ can be made arbitrarily small. For the Euclidean metric pulled back by $u_k$ we find
$$
\nabla u_{k}^T\nabla u_k = \nabla u_{k-1}^T\nabla u_{k-1} + \partial_t\widetilde{\mathrm L}_k^2\nu_k\otimes\nu_k+O\left(\lambda_k^{-1}\right).
$$
\subsection{Stage} We approximate the resulting map after each step by an adapted piecewise affine short map. This introduces a further subdivision of the simplicial decomposition of $\bar\Omega$ after each step. In each step we will leave the map from the foregoing step unchanged near $B$ (due to the cut-off by $\eta_\ell$) and near the $(n-1)$-skeleton of the simplicial decomposition (thanks to $\Theta_k$). This procedure does not allow for a pointwise control of the new metric error, but we are still able to bound it in an integral sense.
\begin{prop}[Stage]\label{lipschitzstage}
Let $u$ be a short map adapted to $(f,g)$. Then for any $\varepsilon>0$, there exists a piecewise affine short map $\widetilde u$ adapted to $(f,g)$ satisfying
\begin{align}
\|u-\widetilde u\|_{C^0(\bar\Omega)}&<\varepsilon\label{c0lipschitz}\\
\int_{\bar\Omega}\operatorname{tr}\left(g-\nabla\widetilde u^T\nabla\widetilde u\right)\,\mathrm dx & < \varepsilon\label{metriclipschitz}
\end{align}
\end{prop}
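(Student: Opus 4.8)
The plan is to carry out one \emph{stage}, i.e.\ $m$ consecutive steps of the scheme set up above, tracking the quantity $\int_{\bar\Omega}\operatorname{tr}\bigl(g-\nabla u_k^T\nabla u_k\bigr)\,\mathrm dx$ along the way and choosing the many free parameters in a carefully nested order. First I would fix $\ell>0$ and, using Proposition \ref{approximationprop} and Definition \ref{piecewiseadapted}, replace $u$ by a piecewise affine subsolution $u_0$ adapted to $(f,g)$ which agrees with $u$ near $B$, is piecewise affine on $\overline{\Omega_{\ell/2}^c}$, and is as close to $u$ in $C^0$ and $C^1$ as desired; since $0\le g-u_0^*g_0\le g$, the integral $\int_{\bar\Omega}\operatorname{tr}(g-u_0^*g_0)\,\mathrm dx$ is finite. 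Decomposing $g-u_0^*g_0=\sum_{k=1}^m a_k^2\,\nu_k\otimes\nu_k$ into primitive metrics (\cite[p.~202, Lemma~1]{laszlo}) fixes $m$, the unit vectors $\nu_k$, and the functions $a_k^2$, which are nonnegative, vanish on $B$, are piecewise constant on $\overline{\Omega_{\ell/2}^c}$, and satisfy $\sum_k a_k^2=\operatorname{tr}(g-u_0^*g_0)$.

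Next I would run the $m$ steps exactly as in the $k$-th Step and Stage paragraphs: from $u_{k-1}$ pass to $u_k=u_{k-1}+\tfrac1{\lambda_k}\mathrm L_k\xi_k$ via Lemma \ref{regularcorrugation} on the simplices where $\nabla u_{k-1}$ is regular and to $u_k=u_{k-1}+\tfrac1{\lambda_k}\widetilde{\mathrm L}_k\xi_k$ via Lemma \ref{singularcorrugation} where it is singular, with the cut-offs $\eta_\ell$ (vanishing near $B$) and $\Theta_k$ (vanishing near the $(n-1)$-skeleton) built into the amplitude $s_k$, and then re-approximate $u_k$ by a piecewise affine subsolution adapted to $(f,g)$ in the sense of Definition \ref{piecewiseadapted} with a prescribed $C^0$/$C^1$-error; this keeps the map continuous, equal to $u_0$ near $B$, unchanged near the skeleton, and only refines the decomposition. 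The pointwise circle inequalities \eqref{pointwiseuplipschitz} and \eqref{pointwiseuplipschitzsing} give $\nabla u_k^T\nabla u_k\le\nabla u_{k-1}^T\nabla u_{k-1}+(1-\delta)\,\Theta_k^2\eta_\ell^2 a_k^2\,\nu_k\otimes\nu_k+O(\lambda_k^{-1})$, the error term being $O(\lambda_k^{-1})$ away from $B$ and vanishing identically near $B$; summing over $k$ and using $\sum_k a_k^2\nu_k\otimes\nu_k=g-u_0^*g_0$ shows $g-\nabla u_m^T\nabla u_m\ge\delta\,(g-u_0^*g_0)-(\text{small})$, so that, $\delta\in(0,1)$ being fixed and the accumulated errors small, every intermediate map and $\widetilde u\coloneqq u_m$ is again a piecewise affine subsolution adapted to $(f,g)$ (positive definite defect off $B$, zero on $B$). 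Then \eqref{c0lipschitz} is immediate, since $\|u-\widetilde u\|_{C^0(\bar\Omega)}\le\|u-u_0\|_{C^0(\bar\Omega)}+\sum_{k=1}^m\bigl(\tfrac1{\lambda_k}\|\mathrm L_k\xi_k\|_{C^0(\bar\Omega)}+(\text{re-approx.\ error})_k\bigr)$ and each of these finitely many summands can be forced below $\varepsilon/(m+1)$.

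The heart of the matter is \eqref{metriclipschitz}. Let $P_k$ be the primitive metric actually added in step $k$; the step formulas give $\operatorname{tr}P_k=|\widetilde\xi_k|^{-2}(2\partial_t\mathrm L_k+\partial_t\mathrm L_k^2)+O(\lambda_k^{-1})$, resp.\ $\operatorname{tr}P_k=\partial_t\widetilde{\mathrm L}_k^2+O(\lambda_k^{-1})$. Integrating simplex by simplex and letting $\lambda_k\to\infty$, the rapidly oscillating factor averages over $S^1$ in its second variable: by \eqref{averagelipschitz}/\eqref{averagelipschitzsing} the $2\partial_t\mathrm L_k$ contribution tends to $0$, and by \eqref{averagelowlipschitz}/\eqref{averagelowlipschitzsing} the $\partial_t\mathrm L_k^2$ resp.\ $\partial_t\widetilde{\mathrm L}_k^2$ contribution is at least $s_k^2-\varepsilon'$; hence for $\lambda_k$ large $\int_{\bar\Omega}\operatorname{tr}P_k\,\mathrm dx\ge(1-\delta)\int_{\bar\Omega}\Theta_k^2\eta_\ell^2 a_k^2\,\mathrm dx-\varepsilon'\!\int_{\bar\Omega}|\widetilde\xi_k|^{-2}\,\mathrm dx-\tfrac{\varepsilon}{6m}$, where $|\widetilde\xi_k|^{-2}\le\|\nabla u_{k-1}\|^2\le\|g\|_{C^0(\bar\Omega)}$ because $\nabla u_{k-1}^T\nabla u_{k-1}\le g$. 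Telescoping, discarding the (arbitrarily small) re-approximation errors, and using $\sum_k a_k^2=\operatorname{tr}(g-u_0^*g_0)$ together with $1-\Theta_k^2\eta_\ell^2\le(1-\eta_\ell^2)+\chi_{U_k}$ gives
$$
\int_{\bar\Omega}\operatorname{tr}\bigl(g-\nabla\widetilde u^T\nabla\widetilde u\bigr)\,\mathrm dx\le\int_{\{\operatorname{dist}(\cdot,B)<\ell\}}\!\operatorname{tr}(g-u_0^*g_0)\,\mathrm dx+\delta\!\int_{\bar\Omega}\!\operatorname{tr}(g-u_0^*g_0)\,\mathrm dx+\sum_{k=1}^m\|a_k^2\|_{C^0(\bar\Omega)}|U_k|+\varepsilon' C+\tfrac{\varepsilon}{6},
$$
with $C\coloneqq m\,\|g\|_{C^0(\bar\Omega)}\,|\bar\Omega|$ independent of $\delta,\varepsilon',|U_k|,\lambda_k$. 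It remains to choose, in this order: $\ell$ (hence $u_0,m,a_k$) so small that the first term is $<\varepsilon/6$ (possible because $\operatorname{tr}(g-u_0^*g_0)\le\operatorname{tr}g$ is bounded and the neighbourhood of $B$ shrinks), then $\delta$ so small that the second is $<\varepsilon/6$, then the $U_k$ of small enough Lebesgue measure, then $\varepsilon'$ with $\varepsilon'C<\varepsilon/6$, and finally the $\lambda_k$ large and the re-approximations accurate enough to absorb everything else into $\varepsilon/6$ and to validate the assertions of the preceding paragraph. Then $\int_{\bar\Omega}\operatorname{tr}(g-\nabla\widetilde u^T\nabla\widetilde u)\,\mathrm dx<\varepsilon$.

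I expect the main obstacle to be, on the analytic side, making the oscillation-averaging of $\int_{\bar\Omega}\operatorname{tr}P_k\,\mathrm dx$ rigorous --- this is exactly where the $L^2$-type estimate of Proposition \ref{laszloestimate} (the identity linking the first derivatives of the corrugation to the trace of the metric defect, together with an integration by parts) does the work, since the $C^1$-norms of $\mathrm L_k,\widetilde{\mathrm L}_k$ are \emph{not} controlled --- and, on the bookkeeping side, keeping the nested smallness conditions on $\ell,\delta,|U_k|,\varepsilon',\lambda_k$ and the successive re-approximations mutually compatible so that the subsolution property and the property ``unchanged on $B$'' survive all $m$ steps.
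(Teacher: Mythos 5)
Your proposal is essentially correct and follows the same route as the paper: fix $\ell$ so the defect is small near $B$, fix $\delta$ so the defect dominates $\delta\operatorname{id}$ away from $B$, run the $m$ corrugation steps with the cut-offs $\eta_\ell$ and $\Theta_k$, re-approximate piecewise-affinely after each step, verify shortness by the pointwise circle inequalities, and verify \eqref{metriclipschitz} by passing $\lambda_k\to\infty$ and averaging the oscillatory factor over $S^1$, then absorbing the $\ell$-, $\delta$-, $U_k$-, $\varepsilon'$- and $\lambda_k$-errors in turn.

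One correction to the final paragraph: the oscillation-averaging of $\int_{\bar\Omega}\operatorname{tr}P_k\,\mathrm dx$ is \emph{not} what Proposition \ref{laszloestimate} (Sz\'ekelyhidi's $L^2$-estimate) is for. The rigorous mechanism is the elementary weak-$*$ convergence fact
$$
\int_{\bar\Omega}f(x,\lambda\langle x,\nu\rangle)\,\mathrm dx\;\xrightarrow{\lambda\to\infty}\;\int_{\bar\Omega}\frac{1}{2\pi}\oint_{S^1}f(x,t)\,\mathrm dt\,\mathrm dx,
$$
proved in the appendix (Proposition \ref{averageintegral}); combined with \eqref{averagelowlipschitz}, \eqref{averagelipschitz}, \eqref{averagelowlipschitzsing}, \eqref{averagelipschitzsing} this yields exactly the integral lower bound on $\int\operatorname{tr}P_k$ you need, with no reference to the $C^1$-norms of $\mathrm L_k,\widetilde{\mathrm L}_k$. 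Proposition \ref{laszloestimate} enters only \emph{after} the stage proposition is established: it converts the smallness of $\int\operatorname{tr}(g-\nabla u^T\nabla u)\,\mathrm dx$ into an a posteriori $L^2$-bound on $\nabla\widetilde u-\nabla u$, which is what makes the gradients Cauchy in $L^2$ in the passage to the limit (proof of Theorem \ref{maintheorem}). So that estimate controls the derivatives across stages, not the averaging within one stage; the stage itself needs only Proposition \ref{averageintegral}.
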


\begin{proof}
Since $\int_{\bar\Omega_\ell}\operatorname{tr}(g-\nabla  u^T\nabla  u)\,\mathrm dx$ converges to zero, as $\ell\to 0$, there exists $\ell>0$ small enough such that after approximating $u$ by a piecewise affine short map $\widetilde u_0$ adapted to $(f,g)$ that is piecewise affine on $\overline{\Omega_{\ell/2}^c}$ we get
\begin{equation}\label{choosingelllipschitz}
\int_{\bar\Omega_\ell}\operatorname{tr}(g-\nabla \widetilde u_0^T\nabla \widetilde u_0)\,\mathrm dx<\frac{\varepsilon}{7}.
\end{equation}
Choose (for $\ell$ now fixed) $\delta$ such that
\begin{align}
\delta \operatorname{id} &< (g-\widetilde u_0^*g_0)|_{\overline{\Omega_{\ell/2}^c}}\quad\text{$\mathcal L^n$-a.e.}\label{choosingdeltalipschitz}\\
\delta &< \frac{\varepsilon}{7}\left(\int_{\overline{\Omega_{\ell}^c}}\operatorname{tr}(g-\nabla \widetilde u_0^T\nabla \widetilde u_0)\,\mathrm dx\right)^{-1}\label{intdelta}.
\end{align}
We use the step iteratively to produce a sequence of maps starting with $\widetilde u_0$. After a step, say the $k$-th, we approximate the resulting map $u_k$ by an adapted piecewise affine short map $\widetilde u_k$ that is piecewise affine on $\overline{\Omega_{\ell/2}^c}$ and leave it unchanged on $\bar\Omega_{\ell/2}$.\\
After $m$ steps, set $\widetilde u\coloneqq\widetilde u_m$. Choosing the free parameter $\lambda_k$ sufficiently large in each step together with suitable approximations by piecewise affine short maps proves \eqref{c0lipschitz}.\\

On each simplex (in $\overline{\Omega_{\ell/2}^c}$) of the simplicial decomposition corresponding to the map $\widetilde u$ we performed $m$ steps and since after each step, the resulting map was affine on that simplex, we used (depending on whether $\nabla \widetilde u_{k-1}$ was regular or singular) the ``regular'' or the ``singular'' $k$-th step. This splits the set $\{1,\ldots,m\}$ into $\mathrm{R}$ and $\mathrm{S}$ corresponding to indices $k$ where $\nabla \widetilde u_{k-1}$ was regular and singular respectively. A direct computation on a fixed simplex shows
\begin{equation}\label{metricerrorcomputation}
\begin{aligned}g-\nabla \widetilde u_m^T\nabla \widetilde u_m & = g - \nabla \widetilde u_0^T\nabla \widetilde u_0 + \nabla \widetilde u_0^T\nabla\widetilde  u_0 - \nabla \widetilde u_m^T\nabla \widetilde u_m\\
& = \sum_{k=1}^m \left(a_k^2\nu_k\otimes\nu_k + \nabla \widetilde u_{k-1}^T\nabla\widetilde  u_{k-1} - \nabla  u_k^T\nabla  u_k\right)+A\\
& = \sum_{k\in\mathrm{R}} \left(\left(a_k^2 -|\widetilde \xi_k|^{-2}\left(2\partial_t\mathrm L_k+\partial_t\mathrm L_k^2\right)\right)\nu_k\otimes\nu_k+O(\lambda_k^{-1})\right)\\
& + \sum_{k\in\mathrm{S}} \left(\left(a_k^2 -\partial_t\widetilde{\mathrm L}_k^2\right)\nu_k\otimes\nu_k+O(\lambda_k^{-1})\right)+A,
\end{aligned}\end{equation}
where
$$A\coloneqq\sum_{k=1}^m\left(\nabla  u_k^T\nabla  u_k-\nabla  \widetilde u_k^T\nabla  \widetilde u_k\right)$$
satisfies $\|A\|_{C^0(\bar\Omega)}<\hat\varepsilon$ and $\hat\varepsilon>0$ will be fixed later (this is possible for every $\hat\varepsilon$ by the use of suitable approximations). In order to prove that $\widetilde u$ is a piecewise affine short map adapted to $(f,g)$, first observe that every $u_k$ is piecewise smooth and continuous. This follows from the infinite differentiability in the interior of every simplex and the fact that $u_k$ agrees with $u_{k-1}$ on $K_k$. Now we prove shortness on $\overline{\Omega_{\ell/2}^c}$: We use the computation \eqref{metricerrorcomputation} and the pointwise estimates \eqref{pointwiseuplipschitz}, \eqref{pointwiseuplipschitzsing} and \eqref{choosingdeltalipschitz} to obtain $\mathcal L^n$-a.e.
\begin{equation*}
\begin{aligned}g-\nabla \widetilde u_m^T\nabla \widetilde u_m & = \sum_{k\in\mathrm{R}} \left(\left(a_k^2 -|\widetilde \xi_k|^{-2}\left(2\partial_t\mathrm L_k+\partial_t\mathrm L_k^2\right)\right)\nu_k\otimes\nu_k+O(\lambda_k^{-1})\right) \\ &+ \sum_{k\in\mathrm{S}} \left(\left(a_k^2 -\partial_t\widetilde{\mathrm L}_k^2\right)\nu_k\otimes\nu_k+O(\lambda_k^{-1})\right)+A
\\ & \geqslant  \sum_{k=1}^m \left(1-(1-\delta)\Theta^2_k\eta^2_\ell\right)a_k^2\nu_k\otimes\nu_k+O(\lambda_k^{-1})+A\\
&\geqslant \delta(g-\widetilde u_0^*g_0)+\sum_{k=1}^m O(\lambda_k^{-1})-\hat\varepsilon\operatorname{id}\\
&>\delta^2\operatorname{id}+\sum_{k=1}^m O(\lambda_k^{-1})-\hat\varepsilon\operatorname{id}>0
\end{aligned}\end{equation*}
provided $\hat\varepsilon$ is small enough and the frequencies $\lambda_k$ are large enough. Note that the pullback of the Euclidean metric is not defined on $K_m$.
In order to prove \eqref{metriclipschitz}, we use the following estimates on a simplex $S$ in the regular and singular case respectively:
\begin{align}\label{averagelowconcrete}
\int_{S}\left(s^2_k-|\widetilde \xi_k|^{-2}(2\partial_t\mathrm L_k+\partial_t\mathrm L_k^2)\right)\,\mathrm dx & < \frac{\varepsilon}{7m\operatorname{vol}\overline{\Omega_{\ell/2}^c}}\\
\int_{S}\left(s^2_k-\partial_t \widetilde{\mathrm L}_k^2\right)\,\mathrm dx & < \frac{\varepsilon}{7m\operatorname{vol}\overline{\Omega_{\ell/2}^c}}\label{averagelowconcretesing}
\end{align}
These estimates are direct consequences of \eqref{averagelowlipschitz} and \eqref{averagelowlipschitzsing} and the fact that for every $f\in C^0(\bar\Omega\times S^1)$
$$
\int_{\bar\Omega}f(x,\lambda \langle x,\nu\rangle)\,\mathrm dx \stackrel{\lambda \to \infty}{\longrightarrow}\int_{\bar\Omega}\frac{1}{2\pi}\oint_{S^1}f(x,t)\,\mathrm dt\,\mathrm dx.
$$
This is the content of Proposition \ref{averageintegral} and will be proved in the appendix. Let
$$\begin{aligned}
\int_{\bar \Omega}\operatorname{tr}\left(g-\nabla \widetilde u_m^T\nabla \widetilde u_m\right)\,\mathrm dx  =& \underbrace{\int_{\bar\Omega_{\ell/2}}\operatorname{tr}\left(g-\nabla \widetilde u_m^T\nabla \widetilde u_m\right)\,\mathrm dx}_{\eqqcolon \I_1}+\\
&+\underbrace{\sum_{i=1}^N\int_{S_i}\operatorname{tr}\left(g-\nabla \widetilde u_m^T\nabla \widetilde u_m\right)\,\mathrm dx,}_{\eqqcolon \I_2}\end{aligned}$$
where $N$ is the total number of simplices in the simplicial decomposition of $\overline{\Omega_{\ell/2}^c}$ according to the map $\widetilde u_m$. Since $\widetilde u_m$ and $\widetilde u_0$ agree on $\bar\Omega_{\ell/2}$, we use \eqref{choosingelllipschitz} to obtain $\I_1\leqslant \frac{\varepsilon}{7}$.
We use \eqref{metricerrorcomputation}, \eqref{averagelowconcrete}, \eqref{averagelowconcretesing}, $\hat\varepsilon<\frac{\varepsilon}{7}(\operatorname{vol}\overline{\Omega_{\ell/2}^c})^{-1}$ and a suitable choice of the $\lambda_k$ to obtain ``$\sum O(\lambda_k^{-1})<\frac{\varepsilon}{7}$'' for estimating $\I_2$:
\begin{equation}\label{integralestimate}\nonumber\begin{aligned}\I_2  \leqslant &\sum_{i=1}^N\bigg[\sum_{k\in\mathrm{R}} \int_{S_i}\left(a_k^2 -|\widetilde \xi_k|^{-2}\left(2\partial_t\mathrm L_k+\partial_t\mathrm L_k^2\right)\right)\,\mathrm dx+\\&
+\sum_{k\in\mathrm{S}} \int_{S_i}\left(a_k^2-\partial_t\widetilde{\mathrm L}_k^2\right)\,\mathrm dx\bigg]+\frac{2\varepsilon}{7}\\
 \leqslant &\sum_{k=1}^m \int_{\overline{\Omega_{\ell/2}^c}}(a_k^2-(1-\delta)\Theta^2_k\eta^2_\ell a_k^2)\,\mathrm dx+\frac{3\varepsilon}{7}\eqqcolon\I_3 + \frac{3\varepsilon}{7},
\end{aligned}\end{equation}
Inequality \eqref{metriclipschitz} follows with the use of \eqref{choosingelllipschitz} and \eqref{intdelta}:
$$
\begin{aligned}
\I_3
 \leqslant &\sum_{k=1}^m\int_{\bar\Omega_{\ell}}(a_k^2-(1-\delta)\Theta_k\eta^2_\ell a_k^2)\,\mathrm dx +\sum_{k=1}^m\int_{\overline{\Omega_{\ell}^c}}(a_k^2-(1-\delta)\Theta_k\eta^2_\ell a_k^2)\,\mathrm dx \\
 \leqslant&\sum_{k=1}^m\int_{\bar\Omega_{\ell}}a_k^2\,\mathrm dx +\sum_{k=1}^m\left(\int_{\overline{\Omega_{\ell}^c}\cap U_k^c}\delta a_k^2\,\mathrm dx+\int_{\overline{\Omega_{\ell}^c}\cap U_k}a_k^2\,\mathrm dx\right) \\
 \leqslant &\int_{\bar\Omega_\ell}\operatorname{tr}(g-\nabla \widetilde u_0^T\nabla \widetilde u_0)\,\mathrm dx + \delta \int_{\overline{\Omega_{\ell}^c}}\operatorname{tr}(g-\nabla \widetilde u_0^T\nabla \widetilde u_0)\,\mathrm dx +\\ & + \sum_{k=1}^m\|a_k\|_{L^\infty(\bar\Omega)}\mathcal L^n(U_k)
\end{aligned}
$$
The first two terms of the last line are bounded by $\frac{\varepsilon}{7}$ and a suitable choice of $U_k$ ensures that $\mathcal L^n(U_k)$ is small enough to make the same hold for the third term as well.\end{proof}

The following $L^2$-estimate due to Sz\'ekelyhidi (see \cite{laszlo}) gives a way to control the derivatives during the stage a posteriori: 
\begin{prop}[Sz\'ekelyhidi]\label{laszloestimate}
If $\varepsilon>0$ from the last proposition is small enough, the maps $u$ and $\widetilde u$ satisfy the estimate
\begin{equation}\label{gradientlipschitz}
\|\nabla u-\nabla \widetilde u\|^2_{L^2(\bar\Omega)} \leqslant C\int_{\bar\Omega}\operatorname{tr}\left(g-\nabla u^T\nabla u\right)\,\mathrm dx.
\end{equation}
\end{prop}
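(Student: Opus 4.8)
The plan is to re-examine the stage construction of Proposition \ref{lipschitzstage} and to track the $L^2$-size of the perturbation step by step, exploiting the ``energy identity'' encoded in the circle (in)equalities satisfied by $\mathrm L$ and $\widetilde{\mathrm L}$ together with the averaging principle of Proposition \ref{averageintegral}. Write $\widetilde u_0$ for the initial piecewise affine subsolution adapted to $(f,g)$ approximating $u$, $u_k$ for the map produced by the $k$-th step and $\widetilde u_k$ for its piecewise affine re-approximation, so that $\widetilde u=\widetilde u_m$. Telescoping gives
\[
\nabla\widetilde u-\nabla u=(\nabla\widetilde u_0-\nabla u)+\sum_{k=1}^m v_k+\sum_{k=1}^m(\nabla\widetilde u_k-\nabla u_k),\qquad v_k\coloneqq\nabla u_k-\nabla\widetilde u_{k-1},
\]
and, by Proposition \ref{approximationprop} and the estimates in Remark \ref{adaptedapprox}, the two families of approximation errors can be made arbitrarily small in $L^\infty(\bar\Omega)$, hence in $L^2(\bar\Omega)$; note also that $v_k$ vanishes on the neighborhood of $B$ on which the $k$-th step does not act. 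It therefore suffices to control $\sum_{k=1}^m\|v_k\|_{L^2(\bar\Omega)}^2$ together with the mixed terms $\langle v_j,v_k\rangle_{L^2}$ for $j\ne k$.

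For a single step I would use the expansions $v_k=\partial_t\mathrm L_k\,\xi_k\otimes\nu_k+O(\lambda_k^{-1})$ (regular case) and $v_k=\partial_t\widetilde{\mathrm L}_k\,\xi_k\otimes\nu_k+O(\lambda_k^{-1})$ (singular case) recorded in the step, noting that on a simplex of the final decomposition the coefficients $s_k,\xi_k,\nu_k$ are constant and $|\xi_k\otimes\nu_k|^2=|\xi_k|^2$ equals $|\widetilde\xi_k|^{-2}$ (regular) or $1$ (singular). In the regular case, integrating \eqref{pointwiseuplipschitz} over $S^1$ and using \eqref{averagelipschitz} yields
\[
\frac1{2\pi}\oint_{S^1}(\partial_t\mathrm L(s,t))^2\,\mathrm dt=\frac1{2\pi}\oint_{S^1}\bigl((1+\partial_t\mathrm L(s,t))^2-1\bigr)\,\mathrm dt\leqslant s^2,
\]
while in the singular case \eqref{pointwiseuplipschitzsing} gives $(\partial_t\widetilde{\mathrm L})^2\leqslant s^2$ pointwise. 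Applying Proposition \ref{averageintegral} on each simplex (letting $\lambda_k\to\infty$), using that the $O(\lambda_k^{-1})$-part and its cross term with the leading term vanish in the limit ($\partial_t\mathrm L$ and $\partial_t\widetilde{\mathrm L}$ being bounded on $[0,c]\times S^1$), and observing that $s_k^2|\xi_k|^2=(1-\delta)\Theta_k^2\eta_\ell^2\,a_k^2\leqslant a_k^2$ in both cases, I obtain $\|v_k\|_{L^2(\bar\Omega)}^2\leqslant\int_{\bar\Omega}a_k^2\,\mathrm dx+o_{\lambda_k}(1)$.

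The mixed terms are handled the same way: for $j<k$, with all data from steps $\leqslant j$ frozen, the factor produced by step $j$ is, on each simplex, a fixed continuous function of $(x,\lambda_k\langle x,\nu_k\rangle)$, against which the $t$-mean of $\partial_t\mathrm L_k$ (resp.\ $\partial_t\widetilde{\mathrm L}_k$) vanishes by \eqref{averagelipschitz} (resp.\ \eqref{averagelipschitzsing}), so Proposition \ref{averageintegral} forces $\langle v_j,v_k\rangle_{L^2}\to0$ as $\lambda_k\to\infty$. Choosing $\lambda_1\ll\dots\ll\lambda_m$ large enough (compatibly with the choices already made for \eqref{c0lipschitz}--\eqref{metriclipschitz}) therefore gives $\|\sum_k v_k\|_{L^2}^2\leqslant\sum_k\|v_k\|_{L^2}^2+(\text{arbitrarily small})\leqslant\sum_k\int_{\bar\Omega}a_k^2\,\mathrm dx+(\text{arbitrarily small})$. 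Since $\sum_k a_k^2=\operatorname{tr}(g-\nabla\widetilde u_0^T\nabla\widetilde u_0)$ and, by the $L^\infty$-closeness of $\nabla\widetilde u_0$ to $\nabla u$ (both bounded in terms of $\|g\|_{L^\infty}$), $\int_{\bar\Omega}\operatorname{tr}(g-\nabla\widetilde u_0^T\nabla\widetilde u_0)\,\mathrm dx\leqslant\int_{\bar\Omega}\operatorname{tr}(g-\nabla u^T\nabla u)\,\mathrm dx+(\text{arbitrarily small})$, combining everything yields
\[
\|\nabla u-\nabla\widetilde u\|_{L^2(\bar\Omega)}^2\leqslant\int_{\bar\Omega}\operatorname{tr}(g-\nabla u^T\nabla u)\,\mathrm dx+(\text{small errors}).
\]

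Because $u$ is a subsolution, $g-\nabla u^T\nabla u$ is positive definite off the null set $B$, so the right-hand integral is a fixed positive number depending only on $u$; the free parameters of the stage (fineness of the piecewise affine approximations, the measures of the sets $U_k$, and the frequencies $\lambda_k$) can then be tuned so that the error term does not exceed this integral, giving the estimate with, say, $C=2$. The main obstacle is precisely this last bookkeeping: none of the individual contributions (the $o_{\lambda_k}(1)$ terms, the $\binom m2$ mixed terms, the approximation errors) is a priori small relative to $\int_{\bar\Omega}\operatorname{tr}(g-\nabla u^T\nabla u)\,\mathrm dx$, so one must verify that they can all be driven below it simultaneously by parameter choices that still respect the conclusions of Proposition \ref{lipschitzstage} — which is exactly what ``$\varepsilon$ small enough'' secures.
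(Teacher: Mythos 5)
Your proposal is correct in its conclusion, but it takes a genuinely different route than the paper, and there is one mismatch in the logic of the reduction worth flagging.

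The paper's proof is a posteriori and does not revisit the stage construction at all. It starts from the Frobenius identity
\[
\operatorname{tr}(g-\nabla\widetilde u^T\nabla\widetilde u)=\operatorname{tr}(g-\nabla u^T\nabla u)-2\operatorname{tr}\bigl(\nabla u^T(\nabla\widetilde u-\nabla u)\bigr)-|\nabla\widetilde u-\nabla u|^2,
\]
integrates the cross term by parts to produce $\langle\Delta u,\widetilde u-u\rangle$ (plus a boundary term), uses the pointwise shortness $g-\nabla\widetilde u^T\nabla\widetilde u\geqslant0$ a.e.\ to discard the left-hand side, and finally uses $\|\widetilde u-u\|_{C^0}<\varepsilon$ to absorb the integration-by-parts terms. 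This is Sz\'ekelyhidi's observation: the $L^2$-gradient estimate is a formal consequence of shortness plus $C^0$-closeness, with constants depending only on $u$. Nothing in the argument cares how $\widetilde u$ was produced. Your argument instead re-opens the stage, expands $\nabla\widetilde u-\nabla u$ into the step perturbations $v_k$ and the approximation errors, estimates $\|v_k\|_{L^2}^2$ via the circle inequalities and Proposition \ref{averageintegral}, and kills the cross terms $\langle v_j,v_k\rangle_{L^2}$ by the zero-average condition \eqref{averagelipschitz}/\eqref{averagelipschitzsing}. This is internally consistent, and it has the advantage of avoiding the integration by parts (which, since $u$ is only piecewise smooth, requires some care with the distributional Laplacian), but it is more laborious and requires tracking all $\binom{m}{2}$ cross terms.

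The one substantive issue is the final sentence: the bookkeeping you describe is \emph{not} secured merely by choosing $\varepsilon$ from Proposition \ref{lipschitzstage} small enough. Smallness of $\varepsilon$ controls $\|\widetilde u-u\|_{C^0}$ and $\int\operatorname{tr}(g-\nabla\widetilde u^T\nabla\widetilde u)$, but it says nothing by itself about the size of the cross terms $\langle v_j,v_k\rangle_{L^2}$; those require the frequencies to be hierarchical, $\lambda_1\ll\dots\ll\lambda_m$, and large enough for Proposition \ref{averageintegral} to have equilibrated — a constraint that lives inside the stage construction and is not visible from the conclusions \eqref{c0lipschitz}--\eqref{metriclipschitz}. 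So what you prove is: \emph{the stage can be run with parameter choices that additionally guarantee \eqref{gradientlipschitz}}, whereas the Proposition as stated (and as proved in the paper) asserts \eqref{gradientlipschitz} for \emph{any} output of Proposition \ref{lipschitzstage} once $\varepsilon$ is small. For the proof of Theorem \ref{maintheorem} this distinction is harmless, since one is free to tune the stage parameters; but strictly speaking your argument proves a constructive variant of the Proposition, not the Proposition itself.
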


\begin{proof}
Use the Definition of the Frobenius inner product to obtain
\begin{equation}\label{frobeniusidentity}
\operatorname{tr}(g-\nabla\widetilde u^T\nabla\widetilde u)=\operatorname{tr}(g-\nabla u^T\nabla u)-2\operatorname{tr}(\nabla u^T(\nabla \widetilde u-\nabla u))-|\nabla\widetilde u-\nabla u|^2.
\end{equation}
Denote by $\upsilon$ the outer unit normal vector of a simplex $S$. We then have the following integration by parts formula (which holds since $u$ is smooth in the interior of $S$):
\begin{equation}\label{integrationbyparts}
\int_S \operatorname{tr}(\nabla u^T(\nabla \widetilde u -\nabla u))\,\mathrm dx = -\int_S \left\langle \Delta u,\widetilde u - u\right\rangle\,\mathrm dx +\int_{\partial S}\operatorname{tr}(\nabla u^T(\widetilde u-u)\otimes \upsilon)\,\mathrm dS
\end{equation}
Formulae \eqref{frobeniusidentity} and \eqref{integrationbyparts} together with the shortness condition $g - \nabla \widetilde u^T\nabla\widetilde u\geqslant 0$ $\mathcal L^n$-a.e.\ imply
$$\begin{aligned}
\|\nabla \widetilde u - \nabla u\|_{L^2(\bar\Omega)}^2\leqslant & \int_{\bar\Omega}\operatorname{tr}(g-\nabla u^T\nabla u)\,\mathrm dx~+ \\ & + C\left(\|\Delta u\|_{L^\infty(\bar\Omega)}+\|\nabla u\|_{L^\infty(\bar\Omega)}\right)\|\widetilde u-u\|_{C^0(\bar\Omega)}.
\end{aligned}$$
Now, if $\varepsilon$ is small enough, we use \eqref{c0lipschitz} to get \eqref{gradientlipschitz}.
\end{proof}
\subsection{Passage to the Limit} We can now use Proposition \ref{lipschitzstage} iteratively to prove our main result, Theorem \ref{maintheorem}:

\begin{proof} Choose a sequence $(\varepsilon_k)_k$ such that $\sum \varepsilon_k\leqslant\varepsilon$, and such that $\sum\sqrt{\varepsilon_k}<\infty$. Since every adapted piecewise affine short map is also an adapted short map, we can apply the previous theorem iteratively starting with $u_0\coloneqq u$. The iteration gives rise to a sequence $u_k$ that is uniformly Lipschitz (shortness implies $|\nabla u_k|\leqslant \sqrt{\operatorname {tr}g}$ $\mathcal L^n$-a.e.) and Cauchy in $C^0(\bar\Omega,\R^n)$ due to \eqref{c0lipschitz}, hence the limit map $v$ is almost everywhere differentiable by Rademacher's theorem and satisfies $\|u_0-v\|_{C^0(\bar\Omega)}<\varepsilon$ by the choice of the $\varepsilon_k$. Because of \eqref{metriclipschitz} we obtain furthermore
$$\lim\limits_{k\to\infty} u_k^*g_0=g \quad\mathcal L^n\text{-a.e.}$$
Choosing $\varepsilon_k$ in each step so small that estimate \eqref{gradientlipschitz} holds, ensures that $\nabla u_k$ is Cauchy in $L^2(\bar\Omega,\R^{n\times n})$ and converges (up to a subsequence) pointwise $\mathcal L^n$-a.e.\ to a limit map $\Lambda$. We have $\nabla v = \Lambda$ $\mathcal L^n$-a.e., since $\Lambda$ is the weak derivative of $v$. Indeed, fix $\phi\in C^\infty_0(\bar\Omega)$. Using the uniform convergence of $u_k$, integration by parts and H\"older's inequality we obtain
$$
\int_{\bar\Omega}v\otimes\operatorname{grad}\phi \,\mathrm dx  = \lim_{k\to\infty}\int_{\bar\Omega}u_k\otimes\operatorname{grad}\phi\,\mathrm dx = -\lim_{k\to\infty}  \int_{\bar\Omega} \phi\nabla u_k \,\mathrm dx = -\int_{\bar\Omega} \phi \Lambda\,\mathrm dx.
$$
We are left to show that $\nabla v^T\nabla v = g$ $\mathcal L^n$-a.e., which follows from
$$
\nabla v^T\nabla v = \Lambda^T\Lambda =\left( \lim_{k\to\infty}\nabla u_k^T\right)\left(\lim_{k\to\infty}\nabla u_k\right)= \lim\limits_{k\to\infty} u_k^*g_0=g \quad\mathcal L^n\text{-a.e.}
$$
Since $u_k|_B=u_{k+1}|_B$ for all $k\in \N$, we find $v|_B=u|_B$. This proves Theorem \ref{maintheorem} up to the statement about the singular set of $v$ (see Remark \ref{remarkc1}).
\end{proof}
\begin{remark}
The density statement from Theorem \ref{maintheorem} can be reformulated as follows: If $X_0$ denotes the space of short maps adapted to $(f,g)$ equipped with the uniform topology, we can consider its $C^0$-closure
$$X=\left\{u:\bar\Omega\to\R^n, u|_B=f, g-\nabla u^T\nabla u \geqslant 0 \text{ $\mathcal L^n$-a.e.}\right\}$$
and consider the functional $\mathcal F:X\to\R$ given by
$$
\mathcal F[u]\coloneqq\int_{\bar\Omega}\operatorname{tr}\left(g-\nabla u^T\nabla u\right)\,\mathrm dx.
$$
Since $X$ consists of uniform Lipschitz functions, $\mathcal F$ is a well-defined and nonnegative upper semicontinuous functional and we obtain as a corollary of Theorem \ref{maintheorem}
\begin{coro}
The zero-set of $\mathcal F$ is dense in $X$.
\end{coro}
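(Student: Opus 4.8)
The plan is to deduce the statement directly from Theorem \ref{maintheorem}, after first recording what the zero-set of $\mathcal F$ actually is. If $u\in X$, then $g-\nabla u^T\nabla u\geqslant 0$ $\mathcal L^n$-a.e., so the integrand defining $\mathcal F[u]$ is non-negative almost everywhere; hence $\mathcal F[u]=0$ forces $\operatorname{tr}(g-\nabla u^T\nabla u)=0$ $\mathcal L^n$-a.e. Since a positive semidefinite symmetric matrix of vanishing trace is the zero matrix, this is equivalent to $\nabla u^T\nabla u=g$ $\mathcal L^n$-a.e. Thus the zero-set of $\mathcal F$ is precisely
$$
\left\{u\in X:\ u|_B=f,\ u^*g_0=g\ \mathcal L^n\text{-a.e.}\right\},
$$
i.e.\ the set of genuine almost-everywhere isometric Lipschitz extensions of $f$.

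Next, to establish density, I would fix $u\in X$ and $\varepsilon>0$. Since $X$ is by definition the $C^0$-closure of the space $X_0$ of subsolutions adapted to $(f,g)$, I can choose an adapted subsolution $u_0\in X_0$ with $\|u-u_0\|_{C^0(\bar\Omega)}<\varepsilon/2$. Applying Theorem \ref{maintheorem} to the subsolution $u_0$ with tolerance $\varepsilon/2$ yields a Lipschitz map $v:\bar\Omega\to\R^n$ satisfying $v|_B=f$, $v^*g_0=g$ $\mathcal L^n$-a.e.\ and $\|u_0-v\|_{C^0(\bar\Omega)}<\varepsilon/2$, whence $\|u-v\|_{C^0(\bar\Omega)}<\varepsilon$ by the triangle inequality. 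By the first paragraph, $\mathcal F[v]=0$.

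It remains to confirm that $v$ lies in $X$ (so that it is a bona fide element of the zero-set of $\mathcal F$), and this is the only point requiring a word of care rather than a genuine obstacle: the iteration underlying the proof of Theorem \ref{maintheorem} exhibits $v$ as a $C^0$-limit of adapted piecewise affine subsolutions, each of which is in particular an adapted subsolution, so $v$ belongs to the $C^0$-closure $X$ of $X_0$. (Alternatively, $v$ satisfies $v|_B=f$ and $g-\nabla v^T\nabla v=0\geqslant 0$ a.e., so it lies in the explicitly described set as well.) Hence $v$ is an element of the zero-set of $\mathcal F$ within distance $\varepsilon$ of $u$ in the uniform topology. Since $u\in X$ and $\varepsilon>0$ were arbitrary, the zero-set of $\mathcal F$ is dense in $X$, and the entire argument is a soft consequence of Theorem \ref{maintheorem}.
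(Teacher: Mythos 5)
Your proof is correct and is essentially the argument the paper has in mind: the paper states the corollary as an immediate reformulation of Theorem~\ref{maintheorem} without spelling out the details, and you have simply made explicit the two routine steps (that $\mathcal F[u]=0$ is equivalent to $u$ being a.e.\ isometric because a positive semidefinite matrix with vanishing trace is zero, and that density follows by first approximating $u\in X$ by some $u_0\in X_0$ and then applying the theorem to $u_0$). No gap; this matches the paper's intent.
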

\end{remark}

\begin{remark}[Global Results]
The method presented here allows to obtain global results from the local ones by a partition of unity argument. Here, ``global'' means that we want to construct solutions to \eqref{dissproblemrelaxed} on a neighborhood of $\Sigma$ (and not only of a point in $\Sigma$). The global result is obtained exactly as in \cite[Section 7]{wasem}, where the step, the stage and the iteration are replaced by the step, the stage and the iteration of the present article.
\end{remark}

\section{Application}
We are now in position to prove Corollary \ref{collapse}:

\begin{proof}
We first construct a piecewise smooth map from the upper hemisphere $\mathrm H^+$ to the disk $\bar D^2$ that is short everywhere but on the equator, where it is isometric. The following considerations provide such a map. Consider the maps $\Phi\in C^\infty([0,\tfrac{\pi}{2}]\times[0,2\pi],\R^3)$ and $\Psi\in C^\infty([0,\tfrac{\pi}{2}]\times[0,2\pi],\R^3)$ defined by
$$\begin{aligned}
\Phi(\vartheta,\varphi) & \coloneqq \begin{pmatrix}\cos\vartheta\cos\varphi\\ \cos\vartheta\sin\varphi\\ \sin\vartheta\end{pmatrix}\\
\Psi(\vartheta,\varphi) & \coloneqq f(\vartheta)\begin{pmatrix}\cos\vartheta\cos\varphi\\ \cos\vartheta\sin \varphi\end{pmatrix},
\end{aligned}
$$
where $f:[0,\frac{\pi}{2}]\to[\frac{1}{2},1]$ is defined by
$$
\vartheta\mapsto\begin{cases}\frac{\vartheta^2}{2}-\vartheta+1& x\in[0,1]\\ \frac{1}{2}&\text{else}\end{cases}.
$$
Observe that the composition $\Psi\circ\Phi^{-1}$ gives rise to a modified projection from $\mathrm H^+$ to $\bar D^2$ that is short everywhere but on the equator (where it is isometric). Since the hemisphere is diffeomorphic to the disk, we can think of this map as a map $u:\bar D^2\to \bar D^2$ that is short everywhere but on $\partial D^2=S^1\subset \bar D^2$ with respect to a suitable metric in the source domain. Since one can inscribe two polygons $P_1\subset P_2\subset \bar D^2$ such that $\operatorname{dist}(\partial D^2,P_2)=\frac\ell4$ and $\operatorname{dist}(\partial D^2,P_1)=\frac\ell2$ we can approximate $u$ on $P_2$ by a piecewise affine map by Proposition \ref{approximationprop} and get using Remark \ref{adaptedapprox} a map $v$ that is piecewise affine on $P_1$ and corresponds to $u$ on $\partial D^2$. Now we can proceed as in Proposition \ref{lipschitzstage} to get a sequence of maps that converges to a isometric Lipschitz map thanks to Theorem \ref{maintheorem}. Observe that one can ensure that the image of each intermediate map is indeed $\bar D^2$, since during the iteration, the maps are left unchanged near $B$ in every step and by choosing $\lambda \gg 1$ large enough, the claim follows from the $C^0$-closeness of the maps during the iteration. Now the same procedure can be performed for the lower hemisphere $\mathrm H^-$ and the resulting maps can be glued together along the equator $S^1$ in each step, since the iteration does not change the map in a neighborhood of $S^1$. This concludes the proof.
\end{proof}

\begin{remark}\label{remarkc1}
Theorem \ref{c1curvature} shows that the map $v$ constructed in Corollary \ref{collapse} cannot be $C^1$ or even merely differentiable, since the standard metric on $S^2$ has Gaussian curvature $1$. In this perspective, the Lipschitz regularity seems to be optimal for equidimensional isometries. Moreover, it implies that the singular set of the map $v$ is dense in $S^2$ and $v$ cannot even be locally injective. We will now show that the Hausdorff dimension of the singular set equals one: In general, the set where the pullback of $u$ constructed in Theorem \ref{maintheorem} is undefined has Hausdorff dimension $n-1$ or is empty: Observe that the $k$-th stage introduces a finite number $N_k$ of simplices $S_{i,k}$ to the ones that are present from the $(k-1)$-th step and the pullback of $u_k$ is in general undefined on the $(n-1)$-skeleton of the simplicial decomposition given by the $\{S_{i,k}\}_i$. The singular set of $u$ can then be described as
$$
S=\bigcup_{k\in\N}\bigcup_{i=1}^{N_k}\partial S_{i,k}
$$
It follows that $$\dim_\mathcal H \left(S\right) = \sup_{k,i}\dim_\mathcal H( \partial S_{i,k}) = n-1,$$
since the boundary of an $n$-simplex has Hausdorff dimension $n-1$. If all $N_k=0$, then the singular set is of course empty.
\end{remark}
\appendix
\section{Appendix}

\begin{prop}\label{averageintegral}Let $\Omega\subset\R^n$, then for every $f\in C^0(\bar\Omega\times S^1)$ it holds that
$$
\int_{\bar\Omega}f(x,\lambda \langle x,\nu\rangle)\,\mathrm dx \stackrel{\lambda \to \infty}{\longrightarrow}\int_{\bar\Omega}\frac{1}{2\pi}\oint_{S^1}f(x,t)\,\mathrm dt\,\mathrm dx.
$$
\end{prop}

\begin{proof}
The proof follows \cite[pp.\ 13 - 15]{frenod}. Split $[0,2\pi]$ into $m$ intervals of length $\frac{2\pi}{m}$. Let $t_k$ be the center of the $k$-th interval and let $\chi_k$ be its characteristic function extended $2\pi$-periodically onto $\R$. Consider
$$
\widetilde f_m(x,t)\coloneqq\sum_{k=1}^mf(x,t_k)\chi_k(t)
$$
As $m\to\infty$, $\widetilde f_m$ converges uniformly to $f$. For $\nu\in\R^n$ let $\pi_\nu:\R^n\to\R, x\mapsto \langle x,\nu\rangle$ we find as $\lambda\to\infty$
$$\chi_k(\lambda\pi_\nu(~\cdot~))\stackrel{*}{\rightharpoondown}\frac{1}{2\pi}\oint_{S^1}\chi_i(t)\,\mathrm dt = \frac{1}{m}\text{ in }L^\infty(\bar\Omega).$$
This is an immediate consequence of the following weak* convergence (in $L^\infty (S^1)$) for any $f\in C^0(S^1)$ as $\lambda\to\infty$:
$$
f(\lambda~\cdot~)\stackrel{*}{\rightharpoondown}\frac{1}{2\pi}\oint_{S^1}f(t)\,\mathrm dt
$$
(see for example \cite[pp.\ 33 - 37]{cioranescu}). Hence we find
\begin{equation}\label{weakstar}
\int_{\bar\Omega}\widetilde f_m(x,\lambda\pi_\nu(x))\,\mathrm dx\stackrel{\lambda\to\infty}{\longrightarrow}\int_{\bar\Omega}\frac{1}{2\pi}\oint_{S^1}\widetilde f_m(x,t)\,\mathrm dt \,\mathrm dx\end{equation}
Now we write
$$\left|\int_{\bar\Omega}\left(f(x,\lambda \pi_\nu(x)) \,\mathrm dx-\frac{1}{2\pi}\oint_{S^1}f(x,t)\,\mathrm dt\right)\,\mathrm dx\right|\leqslant\I_1+\I_2+\I_3,$$
where
$$\begin{aligned}
\I_1 & = \left|\int_{\bar\Omega}\left(f(x,\lambda \pi_\nu(x))-\widetilde f_m(x,\lambda \pi_\nu(x)) \right)\,\mathrm dx\right|\\
\I_2& = \left|\int_{\bar\Omega}\left(\widetilde f_m(x,\lambda \pi_\nu(x)) -\frac{1}{2\pi}\oint_{S^1}\widetilde f_m(x,t)\,\mathrm dt \right)\,\mathrm dx\right|\\
\I_3& = \left|\int_{\bar\Omega}\left(\frac{1}{2\pi}\oint_{S^1}\left(\widetilde f_m(x,t) - f(x,t)\right)\,\mathrm dt\right)\,\mathrm dx\right|.
\end{aligned}$$
Now fix $\varepsilon>0$. The uniform convergence $\widetilde f_m\to f$ implies the existence of an $m$ such that $\I_1,\I_3\leqslant\frac{\varepsilon}{3}$. Using \eqref{weakstar} we get $\I_2<\frac{\varepsilon}{3}$ by a suitable choice of $\lambda$. This concludes the proof. \end{proof}

\providecommand{\bysame}{\leavevmode\hbox to3em{\hrulefill}\thinspace}
\providecommand{\MR}{\relax\ifhmode\unskip\space\fi MR }
\providecommand{\MRhref}[2]{%
  \href{http://www.ams.org/mathscinet-getitem?mr=#1}{#2}
}
\providecommand{\href}[2]{#2}

\end{document}